\DeclareMathOperator{\ev}{{ev}}
\DeclareMathOperator{\pt}{{pt}}
\def\bP{\mathbf{P}}
\def\bF{\mathbb{F}}
\def\bR{\mathbb{R}}
\newcommand{\proj}{{\bP}}
\newcommand{\cpx}{\mathbb{C}}
\newtheorem{prop}{Proposition}[section]
\newtheorem{lem}[prop]{Lemma}
\newtheorem{thm}[prop]{Theorem}
\newtheorem{cor}[prop]{Corollary}
\newtheorem{eg}[prop]{Example}
\email{sclau@math.cuhk.edu.hk, leung@math.cuhk.edu.hk, bswu@ims.cuhk.edu.hk}
\begin{document}
\title[A relation for Gromov-Witten invariants]{A relation for Gromov-Witten
invariants of local Calabi-Yau threefolds}
\author{Siu-Cheong Lau}
\author{Naichung Conan Leung}
\author{Baosen Wu}
\address{The Institute of Mathematical Sciences, The Chinese University of
Hong Kong, Shatin, N.T., Hong Kong}

\begin{abstract}
We compute certain \emph{open} Gromov-Witten invariants for toric Calabi-Yau
threefolds. The proof relies on a relation for ordinary Gromov-Witten
invariants for threefolds under certain birational transformation, and a
recent result of Kwokwai Chan.
\end{abstract}

\keywords{Gromov-Witten invariants, flop, toric Calabi-Yau.}
\maketitle

%\subjclass[2000]{Primary:; Secondary:}

\section{Introduction}

\label{intro}

The aim of this paper is to compute genus zero \emph{open} Gromov-Witten
invariants for toric Calabi-Yau threefolds, through a relation between
ordinary Gromov-Witten invariants of $K_S$ and $K_{S_n}$, where $S_n$ is a
blowup of $S$.

The celebrated SYZ mirror symmetry was initiated from the work of
Strominger-Yau-Zaslow \cite{SYZ}. For toric manifolds, the open
Gromov-Witten invariants which count holomorphic disks play a fundamental
role in the construction of their Landau-Ginzburg mirrors. For toric Fano
manifolds, Cho and Oh \cite{Cho-Oh} classified holomorphic disks with
boundary in Lagrangian torus fibers, and computed the mirror superpotential.
However, when the toric manifold is not Fano, the moduli of holomorphic
disks contains bubble configurations and it has a nontrivial obstruction
theory. The only known results are the computations of the mirror
superpotentials of Hirzebruch surface $\mathbb{F}_2$ by Fukaya-Oh-Ohta-Ono's 
\cite{FOOO2} using their machinery, and $\mathbb{F}_2$ and $\mathbb{F}_3$ by
Auroux's \cite{A} via wall-crossing.

Our first main result Theorem \ref{thm_main} identifies the genus zero open
Gromov-Witten invariant in terms of an ordinary Gromov-Witten invariant of
another Calabi-Yau threefold. For simplicity, we state its corollary for the
canonical bundles of toric surfaces.

\begin{thm}[Corollary of Theorem \protect\ref{thm_main}]
\label{cor_main} Let $K_S$ be the canonical bundle of a toric surface $S$
and $L$ be a Lagrangian torus fiber in $K_S$. We denote $\beta\in
\pi_2(K_S,L)$ as the disk class for a fiber of $K_{S}\to S$. Given any $%
\alpha\in H_2(S,\mathbb{Z})$, we assume that every rational curve
representing the strict transform $\alpha^{\prime}$ of $\alpha$ in $S_1$
cannot be deformed away from the zero section of $K_{S_1}\to S_1$.

Then the genus zero open Gromov-Witten invariant $n_{\alpha+\beta}$ of $%
(K_S,L)$ is equal to an ordinary Gromov-Witten invariant of $K_{S_1}$, that
is 
\begin{equation*}
n_{\alpha+\beta} = \langle 1 \rangle^{K_{S_1}}_{0,0, \alpha^{\prime}}.
\end{equation*}
\end{thm}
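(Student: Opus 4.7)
The plan is to combine two inputs: a recent theorem of Kwokwai Chan, which equates the open Gromov--Witten invariant of a toric Calabi--Yau threefold with a closed Gromov--Witten invariant of a suitable projective compactification, and Theorem~\ref{thm_main} of this paper, which relates the ordinary Gromov--Witten invariants of two threefolds linked by a birational transformation.

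First I would invoke Chan's result to rewrite $n_{\alpha+\beta}$ as a closed genus-zero Gromov--Witten invariant on the projective compactification $\bar X := \bP(K_S \oplus \cO)$, evaluated in the curve class associated to $\alpha + \beta$. Geometrically, a Maslov-index-two disk with boundary on $L$ is capped off by a section meeting the divisor at infinity transversely once, so the matching closed class is $\iota_* \alpha + f$, where $\iota \colon S \hookrightarrow \bar X$ is the zero section and $f$ denotes a fiber of $\bar X \to S$.

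Next I would apply Theorem~\ref{thm_main} to the birational modification that relates $\bar X$ to a compact threefold $\bar Y$ containing $K_{S_1}$ as an open subset; this modification is controlled by the blowup $S_1 \to S$. Under the correspondence of Theorem~\ref{thm_main}, the class $\iota_* \alpha + f$ on the $\bar X$-side is sent to the class $\alpha'$ of the strict transform on the $\bar Y$-side, and the two ordinary Gromov--Witten invariants are identified. The rigidity hypothesis on $\alpha'$ then guarantees that every stable map contributing to the invariant in class $\alpha'$ on $\bar Y$ lies entirely inside the open subset $K_{S_1} \subset \bar Y$, which identifies the compact invariant with the local invariant $\langle 1 \rangle^{K_{S_1}}_{0,0,\alpha'}$.

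The hard part will be the precise tracking of the curve class through the two successive constructions (Chan's open-to-closed compactification and the birational modification of Theorem~\ref{thm_main}), together with checking that the stated rigidity hypothesis suppresses all would-be contributions from curves that could escape to the divisor at infinity or deform off the zero section of $K_{S_1} \to S_1$; that this excess actually vanishes is precisely the role played by the assumption in the statement, and it is what allows the passage from the well-defined compact invariant to the non-compact invariant on $K_{S_1}$.
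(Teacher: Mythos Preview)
Your overall strategy---convert the open invariant to a closed one via Chan, then push it through a birational comparison---is exactly the mechanism the paper uses. However, you have misidentified the content of Theorem~\ref{thm_main}. What you describe as ``Theorem~\ref{thm_main}, which relates the ordinary Gromov--Witten invariants of two threefolds linked by a birational transformation'' is in fact Theorem~\ref{main} (the closed-to-closed identity $\langle[\pt]\rangle^X_{0,1,\beta}=\langle 1\rangle^W_{0,0,\beta'}$ for $X=\mathbf{P}(K_S\oplus\mathcal O_S)$ and $W=\mathbf{P}(K_{S_1}\oplus\mathcal O_{S_1})$). Theorem~\ref{thm_main} is already the open-equals-closed statement $n_b=\langle 1\rangle^{W_0}_{0,0,\alpha'}$ for a general toric Calabi--Yau threefold $X_0$; its proof already absorbs both Chan's comparison (Proposition~\ref{openclose2}) and Theorem~\ref{main}.

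Consequently the paper's proof of the present statement is a one-line specialization: take $X_0=K_S$ in Theorem~\ref{thm_main}, so that the auxiliary toric Calabi--Yau $W_0$ constructed there is $K_{S_1}$ and the hypothesis on $\alpha'$ matches. Your proposal is not wrong, but it re-derives Theorem~\ref{thm_main} in this special case rather than invoking it. If you keep your write-up, replace the references to Theorem~\ref{thm_main} in your second and third paragraphs by Theorem~\ref{main}, and note that the rigidity hypothesis is used twice: once (upstream) so that Proposition~\ref{openclose2} applies, and once (downstream) so that the invariant on $W$ localizes to $K_{S_1}$.
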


The assumption on the class $\alpha$ is needed in order for the
Gromov-Witten invariant of the noncompact space $K_{S_1}$ to be
well-defined. When $S_1$ is Fano, our assumption always holds true for any $%
\alpha$.

We remark that the theorem holds for the more general setting where the disk
boundary has $n$ components lying in $n$ distinct Lagrangian torus fibers.

We prove Theorem \ref{thm_main} using our second main result stated below
and Chan's result \cite{C} relating open and closed Gromov-Witten invariants.

Let $S$ be a smooth projective surface and $X$ be a fiberwise
compactification of $K_S$, i.e., $p:X=\mathbf{P}(K_S\oplus \mathcal{O}%
_{S})\to S$ as a ${\mathbf{P}^{1}}$-bundle. We relate certain $n$-point
Gromov-Witten invariants of $X$ to the Gromov-Witten invariants (with no
point condition) of $W$, the fiberwise compactification of $K_{S_n}$, where $%
S_n$ is the blowup of $S$ at $n$ points.

\begin{thm}
\label{main} For $\beta\in H_2(X,\mathbb{Z})$ whose intersection number with
the infinity section of $X$ is $n$. Let $\beta^{\prime}\in H_2(S_n,\mathbb{Z}%
)$ be the strict transform of $p_*(\beta)\in H_2(S,\mathbb{Z})$. Then 
\begin{equation*}
\langle[\pt],\cdots,[\pt]\rangle^X_{0,n,\beta}=\langle
1\rangle^W_{0,0,\beta^{\prime}}. 
\end{equation*}
Here $[\pt]$ is the Poincar\'e dual of the point class.
\end{thm}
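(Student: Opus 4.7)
The plan is to relate the two Gromov-Witten invariants by exhibiting $X$ and $W$ as different smooth models in a common birational equivalence class, and transferring the count through an intermediate blow-up.

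First, by deformation invariance of Gromov-Witten invariants I represent the $n$ point insertions by $n$ generic points $p_1,\ldots,p_n$ chosen on the infinity section, with images $q_1,\ldots,q_n \in S$. Since $\beta \cdot [S_\infty] = n$, a generic stable map of class $\beta$ that passes through the $p_i$ meets $S_\infty$ precisely at these points with multiplicity one. Let $\pi \colon \tilde X \to X$ be the blow-up at $p_1,\ldots,p_n$. A direct local computation shows that the strict transform of $S_\infty$ is isomorphic to $S_n$, that its normal bundle in $\tilde X$ is $K_{S_n}^{-1}$, and that the strict transforms $\tilde F_i$ of the fibers $F_i = p^{-1}(q_i)$ are pairwise disjoint $(-1,-1)$-curves in $\tilde X$, each disjoint from $\tilde S_\infty$.

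Second, I transfer the $n$-point invariant on $X$ to a no-insertion invariant on $\tilde X$. Any stable map of class $\beta$ passing through the $p_i$ lifts canonically to a stable map on $\tilde X$ of class $\tilde\beta := \pi^*\beta - \ell_1 - \cdots - \ell_n$, where $\ell_i$ is the class of a line in the exceptional divisor $E_i \cong \mathbf{P}^2$. The virtual dimensions agree because $c_1(\tilde X)\cdot\tilde\beta = c_1(X)\cdot\beta - 2n$, and iterating the standard blow-up formula for point insertions in a smooth threefold gives
\begin{equation*}
\langle [\pt], \ldots, [\pt] \rangle^X_{0,n,\beta} = \langle 1 \rangle^{\tilde X}_{0,0,\tilde\beta}.
\end{equation*}

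Third, I identify $\tilde X$ and $W$ via a flop. Performing the $n$ simultaneous Atiyah flops along the disjoint $(-1,-1)$-curves $\tilde F_i$ produces a smooth threefold $\tilde X^+$, and a local coordinate analysis endows $\tilde X^+$ with a fiberwise projection onto $S_n$ that identifies it with $\mathbf{P}(K_{S_n} \oplus \mathcal{O}_{S_n}) = W$. The flopped curves are disjoint from a generic representative of $\tilde\beta$, and under the flop-induced identification of second homology $\tilde\beta$ corresponds exactly to $\beta'$. Invariance of Gromov-Witten invariants under simple flops of $(-1,-1)$-curves then yields $\langle 1 \rangle^{\tilde X}_{0,0,\tilde\beta} = \langle 1 \rangle^{W}_{0,0,\beta'}$, completing the argument. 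The main technical hurdle is the second step: justifying the iterated blow-up formula at the level of virtual fundamental classes, ruling out contributions from stable maps meeting a blown-up point with multiplicity greater than one or from limits acquiring components trapped in an exceptional divisor. A careful comparison of perfect obstruction theories, exploiting the genericity of the $p_i$ and the specific form of $\beta$, is where the core technical work lies.
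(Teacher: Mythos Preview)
Your approach is essentially the same as the paper's: blow up $X$ at point(s) on the infinity section, apply the point-blowup formula to trade each point insertion for a shift in homology class, then observe that the blown-up threefold is related to $W$ by simple flops along the strict transforms of the corresponding fibers, and invoke flop invariance. The paper carries this out one point at a time and iterates, whereas you do all $n$ at once, but this is cosmetic.

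The one place you misjudge the difficulty is your closing paragraph. You flag the iterated blow-up formula as ``the main technical hurdle'' requiring ``a careful comparison of perfect obstruction theories.'' In fact this is precisely the content of the Gathmann and Hu theorems (the paper's Theorem~\ref{thmGH}), which hold in genus zero for arbitrary smooth projective varieties and already handle the virtual class comparison you worry about; no additional work is needed beyond citing them. Likewise, for the flop step the relevant hypothesis in Li--Ruan is that $\tilde\beta$ is not a multiple of an exceptional curve class, which follows since $\alpha\neq 0$; your phrasing ``the flopped curves are disjoint from a generic representative of $\tilde\beta$'' is not the condition actually required. With those two citations in place, the proof is short and your outline is complete.
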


Now we sketch the proof of Theorem \ref{main} in the case $n=1$, that is, 
\begin{equation*}
\langle[\pt]\rangle^X_{0,1,\beta}=\langle 1\rangle^W_{0,0,\beta^{\prime}}
\end{equation*}
for $\beta=\alpha+h$, and $\beta^{\prime}=\pi^!\alpha-e$, the strict
transform of $\alpha$.

Recall that $X=\mathbf{P}(K_S\oplus \mathcal{O}_{S})$ and $W=\mathbf{P}%
(K_{S_1}\oplus \mathcal{O}_{S_1})$ with $\pi:S_1\to S$ the blowup of $S$ at
one point with exceptional divisor $e$. Fixing a generic fiber $H$ of $X$,
let $x$ be the intersection point of $H$ with the divisor at infinity of $X$%
. We construct a birational map $f:X\overset{\pi_1}{\longleftarrow }{\tilde X%
}\overset{\pi_2}{\dashrightarrow }W$ so that $\pi_1$ is the blow up at $x$,
and $\pi_2$ is a simple flop along ${\tilde H}$, the proper image of $H$
under $\pi_1$. We compare Gromov-Witten invariants of $X$ and $W$ through
the intermediate space ${\tilde X}$. The identity follows from the results
of Gromov-Witten invariants under birational transformations which are
listed in Section \ref{Review}.

We remark that Theorem \ref{main} is a corollary of Proposition \ref%
{GWblowup} which holds for all genera. They can be generalized to the case
when $K_S$ is replaced by other local Calabi-Yau threefolds, as we shall
explain in Section \ref{bundle}. \vskip 5pt

This paper is organized as follows. Section \ref{Review} serves as a brief
review on definitions and results that we need in Gromov-Witten theory. In
Section \ref{bundle} we prove Theorem \ref{main} and its generalization to
quasi-projective threefolds. In Section \ref{toric} we deal with toric
Calabi-Yau threefolds and prove Theorem \ref{thm_main}. Finally in Section %
\ref{generalization} we generalize Theorem \ref{main} to ${\mathbf{P}^{n}}$%
-bundles over an arbitrary smooth projective variety.

\textbf{Acknowledgements.} We thank Kwokwai Chan for the stimulating
discussions and his preprint \cite{C} on the comparison of Kuranishi
structures. His ideas on the relationship between open Gromov-Witten
invariants and mirror periods inspired our work. The first author is very
grateful to Mark Gross for the enlightening discussions on wall-crossing and
periods. We also thank Jianxun Hu for helpful comments. The authors are
partially supported by RGC grants from the Hong Kong Government.

\section{Gromov-Witten invariants under birational maps}

\label{Review}

In this section we review Gromov-Witten invariants and their transformation
under birational maps.

Let $X$ be a smooth projective variety. Let $\overline M_{g,n}(X,\beta)$ be
the moduli space of stable maps $f:(C; x_1,\cdots x_n)\to X$ with genus $%
g(C)=g$ and $[f(C)]=\beta\in H_2(X,\mathbb{Z})$. Let $\ev_i: \overline
M_{g,n}(X,\beta)\to X$ be the evaluation map $f\mapsto f(x_i)$. The
Gromov-Witten invariant for classes $\gamma_i\in H^*(X)$ is defined as 
\begin{equation*}
\langle\gamma_1,\cdots,\gamma_n\rangle^X_{g,n,\beta}=\int_{[\overline
M_{g,n}(X,\beta)]^{^{\mathrm{vir}}}} \prod_{i=1}^n\ev_i^*(\gamma_i).
\end{equation*}

When the expected dimension of $\overline M_{g,n}(X,\beta)$ is zero, for
instance, $X$ is a Calabi-Yau threefold, we will be interested primarily in
the invariant 
\begin{equation*}
\langle 1\rangle^X_{g,0,\beta}=\int_{[\overline M_{g,n}(X,\beta)]^{^{\mathrm{%
vir}}}}1
\end{equation*}
which equals to the degree of the $0$-cycle $[\overline M_{g,n}(X,\beta)]^{%
\mathrm{vir}}$.

Roughly speaking, the invariant $\langle\gamma_1,\cdots,\gamma_n%
\rangle^X_{g,n,\beta}$ is a virtual count of genus $g$ curves in the class $%
\beta$ which intersect with generic representatives of the Poincar\'e dual $%
PD(\gamma_i)$. In particular, if we want to count curves in a homology class 
$\beta$ passing through a generic point $x\in X$, we simply take some $%
\gamma_i$ to be the cohomology class $[\pt]$ of a point. There is an
alternative way to do this counting: let $\pi:{\tilde X}\to X$ be the blow
up of $X$ along $x$; we count curves in the homology class $\pi^!(\beta)-e$,
where $\pi^!(\beta)=PD(\pi^*PD(\beta))$ and $e$ is the class of a line in
the exceptional divisor. The following result says that for genus zero case,
these two methods give the same result.

\begin{thm}
\label{thmGH}(\cite{Ga},\cite{Hu}) Let $\pi:{\tilde X}\to X$ be the blowup
at a point. Let $e$ be the line class in the exceptional divisor. Let $%
\beta\in H_2(X),\gamma_1,\cdots,\gamma_n\in H^*(X)$. Then 
\begin{equation*}
\langle\gamma_1,\cdots,\gamma_n,[\pt]\rangle^X_{0,n+1,\beta}=
\langle\pi^*\gamma_1,\cdots,\pi^*\gamma_n\rangle^{{\tilde X}}_{0,n,{%
\pi^!(\beta)-e}}.
\end{equation*}
\end{thm}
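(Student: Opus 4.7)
The plan is to use degeneration to the normal cone of the point $x \in X$ together with the degeneration formula of Jun Li. Writing $d = \dim X$, I would consider the family $\mathcal X = \mathrm{Bl}_{x \times 0}(X \times \Delta) \to \Delta$ over a disc: the generic fiber is $X$, while the central fiber is $\tilde X \cup_E \p{d}$, with the two components meeting along $E \cong \p{d-1}$ (the exceptional divisor of $\tilde X$, viewed as a hyperplane in $\p{d}$). By deformation invariance of Gromov-Witten invariants, the left-hand side of the theorem equals the corresponding invariant computed on the central fiber.

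Next I would apply the degeneration formula and prune the list of contributing splittings. The point class $[\pt]$ may be represented by a point specializing into the interior of the $\p{d}$ component, which forces the last marked point $x_{n+1}$ to land on $\p{d}$. Each $\gamma_i \in H^*(X)$ admits a cycle representative that specializes into $\tilde X$ (realizing $\pi^*\gamma_i$), so the other marked points sit on the $\tilde X$ side. A compatible splitting of the class $\beta$ then has the form $\beta = \tilde\beta + k\ell$ with $k \geq 1$, where $\ell$ is the line class on $\p{d}$; the matching condition $\pi_*\tilde\beta = \beta$ forces $\tilde\beta = \pi^!\beta - ke$.

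The heart of the argument is to show that only $k = 1$ contributes. For $k = 1$, the unique line in $\p{d}$ through a generic point meets $E$ transversally at one point; gluing it to the $\tilde X$ component and absorbing the relative marked point at $E$ produces exactly $\langle \pi^*\gamma_1, \ldots, \pi^*\gamma_n \rangle^{\tilde X}_{0,n,\pi^!\beta - e}$. For $k \geq 2$, I expect the term to vanish: the $\p{d}$-side moduli imposes $k$ tangency conditions along $E$ in addition to the point constraint, and matching these to a connected genus-zero stable map on $\tilde X$ of class $\pi^!\beta - ke$ should yield an over-determined relative invariant, killed by a Gathmann-type vanishing lemma for sufficiently negative contact orders against the exceptional divisor.

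The hardest step, as I see it, is the rigorous verification that the $k \geq 2$ strata vanish. This demands a careful virtual dimension count combined with an instance of Gathmann's relative vanishing result: a genus-zero stable map on $\tilde X$ meeting $E$ with total multiplicity $k \geq 2$ in class $\pi^!\beta - ke$ has too constrained a deformation space to contribute with the required codimension. Once this vanishing is in place, the virtual-class compatibility across the gluing node is standard output of the degeneration formula, and the claimed equality follows.
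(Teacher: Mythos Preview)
The paper does not supply its own proof of this theorem: it is quoted as a known result, with the citations \cite{Ga} and \cite{Hu} in the statement itself. So there is no in-paper argument to compare against; the relevant comparison is with the original proofs of Gathmann and Hu.

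Your degeneration-to-the-normal-cone strategy is essentially Hu's approach: he works with the symplectic sum $\tilde X \#_E \p{d}$ and applies the gluing/sum formula, then isolates the single surviving term. Gathmann's route is rather different, going through his recursive algorithm for relative invariants of a very ample divisor rather than a direct degeneration. So your outline is on the right track and aligns with one of the two cited sources.

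That said, your pruning step is too narrow as written. You phrase the $\p{d}$-side contribution as ``$k$ tangency conditions along $E$'', i.e.\ a single relative point of contact order $k$; but the degeneration formula sums over \emph{all} contact profiles $\mu=(\mu_1,\dots,\mu_r)$ with $\sum\mu_i=k$, and over all ways of distributing components between the two sides so that the glued curve is a connected tree. In particular, neither side is forced to be connected. The vanishing you need is therefore not a single Gathmann-type lemma about high tangency on $\tilde X$, but a systematic dimension/matching argument ruling out every profile with $k\ge 2$ (and every distribution of the $\gamma_i$). Hu handles this by computing the relevant relative invariants of $(\p{d},\p{d-1})$ and showing that, after pairing with the diagonal class on $E$ and the point constraint, only the profile $\mu=(1)$ survives. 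Your sketch would become a proof once you enumerate these profiles and carry out that matching; the ``over-determined on $\tilde X$'' heuristic alone does not suffice, since the excess on the $\tilde X$ side can in principle be absorbed by freedom on the $\p{d}$ side.
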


Another result that we need concerns the transformation of Gromov-Witten
invariants under flops.

Let $f: X\dashrightarrow X_f$ be a simple flop along a smooth $(-1,-1)$
rational curve between two threefolds. There is a natural isomorphism 
\begin{equation*}
\varphi: H_2(X,\mathbb{Z})\to H_2(X_f,\mathbb{Z}).
\end{equation*}
Suppose that $\Gamma$ is an exceptional curve on $X$ and $\Gamma_{\!f}$ is
the corresponding exceptional curve on $X_f$. Then 
\begin{equation*}
\varphi([\Gamma])=-[\Gamma_{\!f}].
\end{equation*}

The following theorem is proved by A.-M.~Li and Y.~Ruan.

\begin{thm}
\label{thmLR}(\cite{LR}) For a simple flop $f:X\dashrightarrow X_f$, if $%
\beta\ne m[\Gamma]$ for any exceptional curve $\Gamma$, we have 
\begin{equation*}
\langle\varphi^*\gamma_1,\cdots,\varphi^*\gamma_n\rangle^X_{g,n,\beta}=%
\langle\gamma_1,\cdots,\gamma_n\rangle^{X_f}_{g,n,\varphi(\beta)}.
\end{equation*}
\end{thm}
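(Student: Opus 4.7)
My plan is to prove the flop invariance via symplectic degeneration to the normal cone of the exceptional curve, reducing everything to a local computation on the resolved conifold. Away from the exceptional loci, $X$ and $X_f$ are canonically isomorphic, and this is the source of the map $\varphi$ on $H_2$. Curves in a class $\beta$ on $X$ that avoid $\Gamma$ match canonically with curves in $\varphi(\beta)$ on $X_f$ that avoid $\Gamma_{\!f}$; the subtlety is that stable maps in class $\beta$ may degenerate to have components that cover $\Gamma$ with arbitrarily high multiplicity, and these components do \emph{not} match canonically with anything on $X_f$.

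To handle this, I would degenerate $X$ to the normal cone of $\Gamma$, obtaining a union $Y \cup_D Z$ where $Z$ is a compactification of the resolved conifold $\cO(-1)\oplus \cO(-1)\to \p{1}$ (the local model around $\Gamma$), glued along a common divisor $D$ to a piece $Y$ that contains all the ``global'' information of $X$. Performing the \emph{same} degeneration on $X_f$ produces $Y \cup_D Z_f$, where $Y$ is unchanged and $Z_f$ is the flopped local model (again a resolved conifold, with $\Gamma$ replaced by $\Gamma_{\!f}$). Then I would apply Jun Li's degeneration formula for Gromov-Witten invariants: both $\langle \varphi^*\gamma_i\rangle^X_{g,n,\beta}$ and $\langle \gamma_i\rangle^{X_f}_{g,n,\varphi(\beta)}$ break up into sums over splitting data (a bipartite graph with matched contact orders along $D$) of products of relative Gromov-Witten invariants of $(Y,D)$ and of $(Z,D)$ or $(Z_f,D)$. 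Since $Y$ appears identically on both sides, matters reduce to comparing the relative invariants of $(Z,D)$ and $(Z_f,D)$ in classes that project to the same non-exceptional curve class under $\varphi$.

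The heart of the argument is therefore the local model. For the resolved conifold, the relative Gromov-Witten theory with prescribed contact along the divisor at infinity can be computed explicitly, and because $Z$ and $Z_f$ differ only by the orientation of the $\p{1}$ and the sign convention built into $\varphi$, the two sets of invariants agree whenever the curve class has nonzero intersection with the divisor at infinity, i.e.\ whenever the class is not a pure multiple of $[\Gamma]$. This is precisely where the hypothesis $\beta\neq m[\Gamma]$ enters: multiple covers of the exceptional rational curve itself have contact order zero and sit entirely in the local model, and only there is the sign discrepancy in $\varphi$ actually felt by the invariants.

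The principal obstacle is analytic rather than combinatorial: setting up the degeneration so that the relative moduli spaces have well-behaved virtual fundamental classes, checking that the degeneration formula applies with the correct virtual contributions, and carrying out the local resolved-conifold computation with sufficient care to isolate the exceptional-class contributions. Once the degeneration formula is in hand, the global comparison reduces to a direct term-by-term matching, and the statement of the theorem essentially follows from the fact that $\varphi$ is an isometry on classes orthogonal to $[\Gamma]$.
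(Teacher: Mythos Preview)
The paper does not give its own proof of this theorem; it is quoted from Li--Ruan \cite{LR} as a known result and is used as a black box in Section~\ref{bundle}. So there is no ``paper's own proof'' to compare against.

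That said, your outline is in the right spirit of the original Li--Ruan argument: they, too, pass to a local model around the $(-1,-1)$-curve via symplectic cutting, develop a relative Gromov--Witten theory and a degeneration (gluing) formula, and reduce the comparison to the resolved-conifold piece. One correction: you invoke ``Jun Li's degeneration formula,'' but historically the formula needed here is precisely the symplectic sum formula proved in \cite{LR} itself; Jun Li's algebro-geometric version came essentially in parallel. This matters because in your sketch you are assuming as input the very machinery whose construction constitutes the bulk of the work in \cite{LR}. Your paragraph on the local model is also a bit glib: the actual identification of the relative invariants of the two local pieces is not just a sign/orientation check but requires a genuine computation (in \cite{LR} this is done by explicit analysis of the relevant relative moduli spaces), and the exclusion $\beta\neq m[\Gamma]$ is needed not only to rule out pure multiple covers but to ensure that the degeneration formula produces a finite, matching sum on both sides. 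As a high-level roadmap your proposal is sound; as a proof it is really a pointer to \cite{LR}, which is exactly what the paper does.
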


\section{Gromov-Witten invariants of projectivization of $K_{S}$}

\label{bundle}

In this section we prove Theorem \ref{main} and its generalization to
certain quasi-projective threefolds.

To begin with, we recall some notations.

Let $S$ be a smooth projective surface. Let $p:X=\mathbf{P}(K_S\oplus 
\mathcal{O}_{S})\to S$ be a ${\mathbf{P}^{1}}$-bundle. $S$ is contained in $X
$ as the zero section of the bundle $K_S$. Denote by $S^+$ the section at
infinity. Let $h$ be the fiber class of $X$. Then any $\beta\in H_2(X,%
\mathbb{Z})$ can be written as $\alpha+nh$ for a class $\alpha$ in $H_2(S,%
\mathbb{Z})$. Here $n$ is the intersection number of $\beta$ with the
infinity section of $X$, and $p_*(\beta)=\alpha$. By Riemann-Roch, the
expected dimension of $\overline M_{0,n}(X,\beta)$ is $3n$. We have the
Gromov-Witten invariant 
\begin{equation*}
\langle[\pt],\cdots,[\pt]\rangle_{0,n,\beta}^X
\end{equation*}
which counts rational curves in the class $\beta$ passing through $n$
generic points.

Let $x_1,\cdots,x_n$ be $n$ distinct points in $X$. Let $y_i=p(x_i)\in S$.
Let $\pi:S_n\to S$ be the blowup of $S$ along the set of points $%
y_1,\cdots,y_n$ with exceptional divisors $e_1,\cdots,e_n$. We form $%
\beta^{\prime}=\pi^!\alpha-\sum_{i=1}^n e_i\in H_2(S_n,\mathbb{Z})$, which
is called the strict transform of $\alpha$. Denote $W=\mathbf{P}%
(K_{S_n}\oplus \mathcal{O}_{S_n})$. Then $\beta^{\prime}$ is a homology
class of $W$ since $S_n\subset W$. The moduli space $\overline
M_{0,0}(W,\beta^{\prime})$ has expected dimension zero, we have the
Gromov-Witten invariant $\langle 1\rangle^W_{0,0,\beta^{\prime}}$.

\begin{prop}
\label{GWblowup} Let $S$ be a smooth projective surface. Denote $p:X=\mathbf{%
P}(K_S\oplus \mathcal{O}_{S})\to S$ . Let $X_1$ be the blowup of $X$ at a
point $x$ on the infinity section of $X\to S$. Let $W=\mathbf{P}%
(K_{S_1}\oplus\mathcal{O}_{S_1})$ where $\pi:S_1\to S$ is the blowup of $S$
at the point $y=p(x)$. Then $W$ is a simple flop of $X_1$ along the proper
transform ${\tilde H}$ of the fiber $H$ through $x$.
\end{prop}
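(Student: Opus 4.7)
The plan is to exhibit $W$ as the simple flop of $X_1$ along $\tilde H$ by constructing a common blowup $Y$ with $Y \to X_1$ the blowup of $\tilde H$ and $Y \to W$ the blowup of another $(-1,-1)$-curve $\tilde H' \subset W$. First I would verify that $\tilde H$ is a $(-1,-1)$-curve in $X_1$. Since $H$ is a fiber of the $\mathbf{P}^1$-bundle $p: X \to S$, one has $N_{H/X} \cong \mathcal{O}_H \oplus \mathcal{O}_H$. The standard normal bundle formula for the strict transform under a point blowup then gives $N_{\tilde H/X_1} \cong N_{H/X} \otimes \mathcal{O}_{\tilde H}(-1) \cong \mathcal{O}(-1) \oplus \mathcal{O}(-1)$; alternatively, adjunction yields $\deg N_{\tilde H/X_1} = -2$, and a direct computation of transition functions on the two coordinate charts of $\tilde H \cong \mathbf{P}^1$ shows each summand has no global sections, forcing the split.

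Next I would construct a birational map $\phi: X_1 \dashrightarrow W$ and identify its indeterminacy as $\tilde H$. Because $\pi: S_1 \to S$ is an isomorphism over $S \setminus \{y\}$, restriction identifies $K_S|_{S \setminus \{y\}}$ with $K_{S_1}|_{S_1 \setminus e}$, giving $X|_{S \setminus \{y\}} \cong W|_{S_1 \setminus e}$ and a birational map $X \dashrightarrow W$ defined on $X \setminus H$. Pulling back via $\pi_1$, this extends over $E \setminus \{\tilde H \cap E\}$: any tangent direction at $x$ transverse to $T_x H$ descends via $dp$ to a point of $\mathbf{P}(T_y S) = e \subset S_1$, supplying the missing coordinate. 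Thus $\phi$ has indeterminacy exactly $\tilde H$, and setting $Y = \mathrm{Bl}_{\tilde H}(X_1)$, the universal property of blowups produces a morphism $\psi: Y \to W$ resolving $\phi$.

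Finally I would analyze $\psi$ in local coordinates. Taking $(u,v)$ on $S$ near $y$ and a fiber coordinate $w$ on $X$ near $x$, and tracking the transition $\pi^*(du \wedge dv) = u'(du' \wedge dv')$ that appears in the $K_{S_1}$-coordinate of $W$, one checks that $\psi$ is an isomorphism away from the exceptional divisor $F = \mathbf{P}(\mathcal{O}(-1) \oplus \mathcal{O}(-1)) \cong \mathbf{P}^1 \times \mathbf{P}^1$ of $Y \to X_1$, and that $\psi$ contracts $F$ via its second ruling onto a smooth curve $\tilde H' \subset W$, which one identifies as $e$ viewed inside the zero section $S_1 \subset W$. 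The normal bundle of $\tilde H'$ fits into
\begin{equation*}
0 \to N_{e/S_1} \to N_{\tilde H'/W} \to N_{S_1/W}|_e \to 0,
\end{equation*}
which reads $0 \to \mathcal{O}(-1) \to N_{\tilde H'/W} \to \mathcal{O}(-1) \to 0$ (using $N_{S_1/W} \cong K_{S_1}$ and $K_{S_1}|_e \cong \mathcal{O}(-1)$ by adjunction on $e$); this splits since $H^1(\mathbf{P}^1, \mathcal{O}) = 0$, so $N_{\tilde H'/W} \cong \mathcal{O}(-1) \oplus \mathcal{O}(-1)$. By the universal property of blowups, $\psi$ is the blowup of the $(-1,-1)$-curve $\tilde H'$, and $X_1 \dashrightarrow W$ is the simple flop along $\tilde H$.

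The main obstacle is the local-coordinate verification that $\psi$ contracts $F$ precisely via one of its two rulings onto a smooth curve rather than onto, say, a reducible fiber, requiring careful bookkeeping of the projective bundle coordinates through the blowup $\pi_1$ and the twist $\pi^*K_S = K_{S_1}(-e)$.
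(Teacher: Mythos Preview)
Your approach is correct and complete in outline, but it differs from the paper's argument in how you identify the flopped variety with $W$.

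Both you and the paper begin the same way: check that $\tilde H$ has normal bundle $\mathcal{O}(-1)\oplus\mathcal{O}(-1)$, so that a simple flop $f:X_1\dashrightarrow X'$ exists, realized as the blowup $X_2\to X_1$ along $\tilde H$ followed by the contraction of the other ruling of the exceptional $\mathbf{P}^1\times\mathbf{P}^1$. The question is then why $X'\cong W$. You resolve this by constructing the birational map $X_1\dashrightarrow W$ directly from the identification $K_S|_{S\setminus\{y\}}\cong K_{S_1}|_{S_1\setminus e}$, extending across $E$, and then verifying in explicit local coordinates (tracking the twist $\pi^*(du\wedge dv)=u\,du\wedge dv'$) that after blowing up $\tilde H$ the map becomes the blowdown of the second ruling onto the $(-1,-1)$-curve $e\subset S_1\subset W$. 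The paper instead avoids coordinates by a commutation-of-blowups trick: it observes that the composite $X_2\to X_1\to X$ (blow up the point $x$, then the curve $\tilde H$) equals the composite $Z_2\to Z_1\to X$ where $\rho_1:Z_1\to X$ blows up the whole fiber $H$ first and $\rho_2:Z_2\to Z_1$ then blows up the curve $F=\rho_1^{-1}(x)$. Since $N_{H/X}$ is pulled back from $y$, the first blowup $Z_1\to X$ is simply the base change to $S_1$, i.e.\ $Z_1=\mathbf{P}(\pi^*K_S\oplus\mathcal{O}_{S_1})$; from this description the identification $X'\cong W$ drops out without coordinate computations. Your route is more explicit and self-contained; the paper's is shorter and coordinate-free but leaves the verification $Z_2=X_2$ to the reader.

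Two small points of imprecision in your sketch, neither fatal. First, the sentence ``the universal property of blowups produces a morphism $\psi:Y\to W$'' is not literally correct: blowing up the indeterminacy locus does not automatically resolve a rational map via any universal property. What actually justifies the extension is exactly the local coordinate check you carry out in the next paragraph, so you should phrase it that way. Second, your description of how $\phi$ extends over $E\setminus\{\tilde H\cap E\}$ is incomplete: the map $dp$ supplies the $e$-coordinate, but the image is not contained in the section $e\subset S_1$; rather $E\setminus\{\mathrm{pt}\}$ maps isomorphically onto $p_W^{-1}(e)\setminus e^0\cong\mathbf{F}_1\setminus(-1\text{-curve})$, with the fiber coordinate over $e$ also determined. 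This is exactly what your local computation will show, so the final picture is right.
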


\begin{proof}
Since ${\tilde H}$ is the proper transform of $H$ under the blowup $%
\pi_1:X_1\to X$ at $x$, ${\tilde H}$ is isomorphic to ${\mathbf{P}^{1}}$
with normal bundle $\mathcal{O}(-1)\oplus\mathcal{O}(-1)$. We have a simple
flop $f:X_1\dashrightarrow X^{\prime}$ along ${\tilde H}$. Next we show that 
$X^{\prime}\cong W$. To this end, we use an alternative way to describe the
birational map $f\pi_1^{-1}:X\dashrightarrow X^{\prime}$.

It is well known that a simple flop $f$ is a composite of a blowup and a
blowdown. Let $\pi_2:X_2\to X_1$ be the blowup of $X_1$ along ${\tilde H}$
with exceptional divisor $E_2\cong {\tilde H}\times {\mathbf{P}^{1}}$.
Because the restriction of normal bundle of $E_2$ to ${\tilde H}$ is $%
\mathcal{O}(-1)$, we can blow down $X_2$ along the ${\tilde H}$ fiber
direction of $E_2$ to get $\pi_3:X_2\to X^{\prime}$. Of course we have $%
f=\pi_3\pi_2^{-1}$ and $\pi_3\pi_2^{-1}\pi_1^{-1}:X\dashrightarrow X^{\prime}
$.

Notice that the composite $\pi_2^{-1}\pi_1^{-1}:X\dashrightarrow X_2$ can be
written in another way. Let $\rho_1:Z_1\to X$ be the blowup of $X$ along $H$
with exceptional divisor $E^{\prime}$. Let $F$ be the inverse image $%
\rho^{-1}(x)$. Then $F\cong {\mathbf{P}^{1}}$. Next we blow up $Z_1$ along $F
$ to get $\rho_2:Z_2\to Z_1$. It is straightforward to verify that $Z_2=X_2$
and $\rho_1\rho_2=\pi_1\pi_2$. Thus we have $\pi_3\pi_2^{-1}\pi_1^{-1}=%
\pi_3(\rho_1\rho_2)^{-1}:X\dashrightarrow X^{\prime}$, from which it follows
easily that $X^{\prime}\cong W$.
\end{proof}

\begin{cor}
With notations as in the Proposition, and let $e_{1}$ be the exceptional
curve class of $\pi $, we have 
\begin{equation*}
\langle 1\rangle _{g,0,\beta }^{X_{1}}=\langle 1\rangle _{g,0,\beta ^{\prime
}}^{W}
\end{equation*}%
where $\beta =\alpha +k{\tilde{H}}$ and $\beta ^{\prime }=\pi ^{!}\alpha
-ke_{1}$ for any nonzero $\alpha \in H_{2}(S,\mathbb{Z})$.
\end{cor}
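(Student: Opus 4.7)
The approach is to apply the Li--Ruan flop invariance (Theorem \ref{thmLR}) with no cohomology insertions to the simple flop $f:X_1\dashrightarrow W$ produced by Proposition \ref{GWblowup}. The no-insertion case of that theorem states that for any class $\beta\in H_2(X_1,\mathbb{Z})$ not a multiple of the flopping curve $[\tilde H]$, one has $\langle 1\rangle^{X_1}_{g,0,\beta}=\langle 1\rangle^{W}_{g,0,\varphi(\beta)}$, where $\varphi:H_2(X_1,\mathbb{Z})\to H_2(W,\mathbb{Z})$ is the natural homology isomorphism induced by the flop. So the only real work is (a) checking the non-degeneracy hypothesis and (b) identifying $\varphi(\beta)$ with $\beta'$.

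For (a), decompose $H_2(X_1,\mathbb{Z})=H_2(S,\mathbb{Z})\oplus\mathbb{Z}h\oplus\mathbb{Z}[e_{X_1}]$, where $h$ is the fiber class of $X\to S$ and $e_{X_1}$ is the exceptional line of $\pi_1:X_1\to X$. Since $H$ passes through $x$, the proper transform satisfies $[\tilde H]=h-[e_{X_1}]$, whereas $\alpha$ lies in the $H_2(S,\mathbb{Z})$ summand via the zero section; thus $\beta=\alpha+k[\tilde H]$ is a multiple of $[\tilde H]$ if and only if $\alpha=0$, which is excluded by hypothesis.

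For (b), $\varphi([\tilde H])=-[\tilde H_W]$ by definition, where $[\tilde H_W]$ is the flopped $(-1,-1)$-curve in $W$. Using the alternative factorization of $f$ through $Z_1=\mathrm{Bl}_H(X)$ given in the proof of Proposition \ref{GWblowup} (which, via the identification $Z_1=X\times_S S_1$, realizes $f$ as the elementary transformation over $S_1$ swapping $\mathbf{P}(\pi^\ast K_S\oplus\mathcal{O}_{S_1})$ and $\mathbf{P}(K_{S_1}\oplus\mathcal{O}_{S_1})$), one identifies $\tilde H_W$ with the exceptional curve $e_1\subset S_1\hookrightarrow W$ embedded via the zero section, so $\varphi([\tilde H])=-[e_1]$. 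Moreover, because $f$ is an isomorphism away from $\tilde H$ and, via the same factorization, restricts on the zero sections to the blow-up $\pi:S_1\to S$, the zero-section class $\alpha\in H_2(S,\mathbb{Z})\hookrightarrow H_2(X_1,\mathbb{Z})$ is sent by $\varphi$ to the total transform $\pi^!\alpha\in H_2(S_1,\mathbb{Z})\hookrightarrow H_2(W,\mathbb{Z})$. Combining, $\varphi(\alpha+k[\tilde H])=\pi^!\alpha-k[e_1]=\beta'$, and applying Theorem \ref{thmLR} finishes the proof.

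The main obstacle is the computation in step (b): the identity $\varphi([\tilde H])=-[e_1]$ and, especially, $\varphi(\alpha)=\pi^!\alpha$ require one to track non-exceptional classes through an explicit blow-up/blow-down realization of the flop. Both identities are most transparent via the $Z_1=X\times_S S_1$ factorization from Proposition \ref{GWblowup}, which simultaneously exhibits the flop as an elementary transformation of projective bundles over $S_1$ and makes the induced map on zero sections equal to the base blow-up $\pi$.
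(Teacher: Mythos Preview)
Your proposal is correct and follows essentially the same approach as the paper: apply Theorem~\ref{thmLR} to the flop $f:X_1\dashrightarrow W$ from Proposition~\ref{GWblowup}, and compute $\varphi(\beta)=\beta'$ using $\varphi([\tilde H])=-e_1$. The paper's proof is terser, simply asserting $\varphi(\pi_1^!\alpha+k[\tilde H])=\pi^!\alpha-ke_1$ without spelling out the non-exceptionality check or the identity $\varphi(\alpha)=\pi^!\alpha$; your explicit verification of these via the $Z_1=X\times_S S_1$ factorization is a welcome elaboration rather than a different route.
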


\begin{proof}
>From the Proposition, we know there is a flop $f:X_1\dashrightarrow W$.
Applying Theorem \ref{thmLR} to the flop $f$, since $\varphi([{\tilde H}%
])=-e_1$, we get 
\begin{equation*}
\varphi(\beta)=\varphi((\pi_1^!\alpha)+[k{\tilde H}])=\pi^!\alpha-ke_1=%
\beta^{\prime}.
\end{equation*}
It follows that 
\begin{equation*}
\langle 1\rangle^{X_1}_{g,0,\beta}=\langle 1\rangle^{W}_{g,0,\beta^{\prime}}.
\end{equation*}
\end{proof}

When $S_{1}$ is a Fano surface, $K_{S_{1}}$ is a local Calabi-Yau threefold
and curves inside $S_{1}$ can not be deformed away from $S_{1}$. Indeed any
small neighborhood $N_{S_{1}}$ of $S_{1}$ (resp. $N_{S\cup C}$ of $S\cup C$)
inside any Calabi-Yau threefold has the same property. Here $C$ is a $\left(
-1,-1\right) $-curve which intersects $S$ transversely at a single point.
Therefore we can define local Gromov-Witten invariants for $N_{S_{1}}$ and $%
N_{S\cup C}$. Using a canonical identification,%
\begin{equation*}
H_{2}\left( S_{1}\right) \simeq H_{2}\left( S\right) \oplus \mathbb{Z}%
\left\langle e_{1}\right\rangle \simeq H_{2}\left( S\cup C\right) \text{,}
\end{equation*}%
the above corollary implies that the local Gromov-Witten invariants for
local Calabi-Yau threefolds $N_{S_{1}}$ and $N_{S\cup C}$ are the same. When
the homology class in $S_{1}$ does not have $e_{1}$-component, this becomes
simply the local Gromov-Witten invariants for $N_{S}$. This last relation
for Gromov-Witten invariants of $N_{S_{1}}$ and $N_{S}$ was pointed out to
us by J.~Hu \cite{Hu2} and he proved this result by the degeneration method.
This relationship was first observed by Chiang-Klemm-Yau-Zaslow \cite{CKYZ}
in the case $S$ is $\mathbf{P}^{2}$ and genus is zero by explicit
calculations.

These results can be generalized to the case when $K_{S}$ is replaced by
other local Calabi-Yau threefolds. The illustration of such a generalization
is given at the end of this section. \newline

Now we prove Theorem \ref{main}, that is 
\begin{equation*}
\langle[\pt],\cdots,[\pt]\rangle^X_{0,n,\beta}=\langle
1\rangle^W_{0,0,\beta^{\prime}}.
\end{equation*}

\begin{proof}[Proof of Theorem \protect\ref{main}]
First we assume $n=1$, that is, $\pi:S_1\to S$ is a blowup of $S$ at one
point $y$ with exceptional curve class $e_1$ and $W=\mathbf{P}(K_{S_1}\oplus%
\mathcal{O}_{S_1})$. We need to show that 
\begin{equation*}
\langle[\pt]\rangle^X_{0,1,\beta}=\langle 1\rangle^W_{0,0,\beta^{\prime}},
\end{equation*}
where $\beta=\alpha+h$ and $\beta^{\prime}=\pi^!\alpha-e_1$.

Applying Theorem \ref{thmGH} to $\pi_1:X_1\to X$, and notice that 
\begin{equation*}
\pi_1^!(\beta)-e=\pi_1^!(\alpha+h)-e=\pi_1^!\alpha+[{\tilde H}],
\end{equation*}
which we denote by $\beta_1$, we then have $\langle[\pt]\rangle^X_{0,1,%
\beta}=\langle 1\rangle^{X_1}_{0,0,\beta_1}$. Next we apply Proposition \ref%
{GWblowup} for $k=1$, we get 
\begin{equation*}
\langle 1\rangle^{X_1}_{0,0,\beta_1}=\langle
1\rangle^{W}_{0,0,\beta^{\prime}},
\end{equation*}
which proves the result for $n=1$.

For $n>1$, we simply apply the above procedure successively.
\end{proof}

In particular, when $S={\mathbf{P}^{2}}$ and $n=1$, $S_1$ is the Hirzebruch
surface $\mathbb{F}_1$. We use $\ell$ to denote the line class of ${\mathbf{P%
}^{2}}$. The class of exceptional curve $e$ represents the unique minus one
curve in $\mathbb{F}_1$ and $f=\pi^!\ell-e$ is its fiber class. In this
case, the corresponding class $\beta^{\prime}=k\pi^!\ell-e=(k-1)e+kf$. The
values of $N_{0,\beta^{\prime}}$ have been computed in \cite{CKYZ}. Starting
with $k=1$, they are $-2, 5, -32, 286, -3038, 35870$. (See Table \ref%
{tableF1}.)

\begin{table}[h!]
\begin{center}
\begin{tabular}{|c|rrrrrrrr|}
\hline
& b & 0 & 1 & 2 & 3 & 4 & 5 & 6 \\ \hline
a &  &  &  &  &  &  &  &  \\ 
0 &  &  & $-2$ & 0 & 0 & 0 & 0 & 0 \\ 
1 &  & 1 & 3 & 5 & 7 & 9 & 11 & 13 \\ 
2 &  & 0 & 0 & $-6$ & $-32$ & $-110$ & $-288$ & $-644$ \\ 
3 &  & 0 & 0 & 0 & 27 & 286 & 1651 & 6885 \\ 
4 &  & 0 & 0 & 0 & 0 & $-192$ & $-3038$ & $-25216$ \\ 
5 &  & 0 & 0 & 0 & 0 & 0 & 1695 & 35870 \\ 
6 &  & 0 & 0 & 0 & 0 & 0 & 0 & $-17064$ \\ \hline
\end{tabular}%
\end{center}
\par
\vskip 5pt
\caption{Invariants of $K_{\mathbb{F}_1}$ for classes $ae+bf$}
\label{tableF1}
\end{table}

Next we generalize Theorem \ref{main} to quasi-projective threefolds.

Let $X$ be a smooth quasi-projective threefold. Assume there is a
distinguished Zariski open subset $U\subset X$, so that $U$ is isomorphic to
the canonical line bundle $K_S$ over a smooth projective surface $S$, and
there is a Zariski open subset $S^{\prime}\subset S$, so that each fiber $F$
of $K_S$ over $S^{\prime}$ is closed in $X$. Typical examples of such
threefolds include a large class of toric Calabi-Yau threefolds.

Theorem \ref{main} still holds for such threefolds with some mild condition.
Now we sketch the proof.

First we construct a partial compactification $\bar X$ of $X$. Given a
generic point $x\in U$, we have a unique fiber through $x$, say $H$. Let $%
\{y\}=H\cap S$. Take a small open neighborhood $y\in V$, we compactify $K_V$
along the fiber by adding a section at infinity as we did before. We call
the resulting variety by $\bar X$.

The Gromov-Witten invariant $\langle[\pt]\rangle_{0,1,\beta}^{\bar X}$ is
well defined. Indeed, let $\beta\in H_2(\bar X,\mathbb{Z})$; and suppose $%
\beta=\alpha+[H]$ for some $\alpha$ in $H_2(S,\mathbb{Z})$. The moduli space
of genus zero stable maps to $\bar X$ representing $\beta$ and passing
through the generic point $x$ is compact, provided that $S$ is Fano. Then
the invariants can be defined as before.

To show the equality $\langle[\pt]\rangle_{0,1,\beta}^{\bar X}=\langle
1\rangle_{0,0,\beta^{\prime}}^{{\tilde S}}$, we construct a birational map $%
f:\bar X\dashrightarrow W$ as in the proof of Theorem \ref{main}. Let ${%
\tilde S}\subset W$ be the image of $S$. Then ${\tilde S}$ is the blowup of $%
S$ at $y$. Let $\beta^{\prime}\in H_2({\tilde S},\mathbb{Z})$ be the strict
transform of $\alpha$. Suppose that every rational curve in $\beta^{\prime}$
lies in ${\tilde S}$, for instance, when ${\tilde S}$ is Fano, or ${\tilde S}
$ can be contracted by a birational morphism, then we can define local
Gromov-Witten invariant $\langle 1\rangle_{0,0,\beta^{\prime}}^{{\tilde S}}$%
. The equality follows directly as in the proof of Theorem \ref{main}.

\section{Toric Calabi-Yau threefolds}

\label{toric}

In this section we prove our main Theorem \ref{thm_main}. As an application,
we show that certain open Gromov-Witten invariants for toric Calabi-Yau
threefolds can be computed via local mirror symmetry.

First we recall the standard notations. Let $N$ be a lattice of rank $3$, $M$
be its dual lattice, and $\Sigma_0$ be a strongly convex simplicial fan
supported in $N_\bR$, giving rise to a toric variety $X_0 = X_{\Sigma_0}$. ($%
\Sigma_0$ is `strongly convex' means that its support $|\Sigma_0|$ is convex
and does not contain a whole line through the origin.) Denote by $v_i \in N$
the primitive generators of rays of $\Sigma_0$, and denote by $D_i$ the
corresponding toric divisors for $i = 0, \ldots, m-1$, where $m \in \mathbb{Z%
}_{\geq 3}$ is the number of such generators. \vskip 5pt

\noindent\textbf{Calabi-Yau condition for $X_0$:} There exists $\underline{%
\nu} \in M$ such that $\left( \underline{\nu}\, , \, v_i \right) = 1 $ for
all $i = 0, \ldots, m-1$.

By fixing a toric Kaehler form $\omega$ on $X_0$, we have a moment map $\mu:
X_0 \to P_0$, where $P_0 \subset M_\bR$ is a polyhedral set defined by a
system of inequalities 
\begin{equation*}
\left( v_j\, , \, \cdot \right) \geq c_j
\end{equation*}
for $j = 0, \ldots, m-1$ and suitable constants $c_j \in \mathbb{R}$.
(Figure \ref{KP2_KF1} shows two examples of toric Calabi-Yau varieties.)%
\newline

To investigate genus zero open Gromov-Witten invariants of $X_0$, we start
with the following simple lemma for rational curves in toric varieties:

\begin{lem}
\label{hol_sphere} Let $Y$ be a toric variety which admits $\nu \in M$ such
that $\nu$ defines a holomorphic function on $Y$ whose zeros contain all
toric divisors of $Y$. Then the image of any non-constant holomorphic map $%
u: {\mathbf{P}}^1 \to Y$ lies in the toric divisors of $Y$. In particular
this holds for a toric Calabi-Yau variety.
\end{lem}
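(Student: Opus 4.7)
The plan is to convert $\nu$ into a holomorphic function on $\bP^{1}$ and invoke the maximum modulus principle. Since $\nu \in M$ is a character of the open torus $T \subset Y$, it extends to a rational function on $Y$ whose principal divisor is $\sum_{i}(\nu\, ,\, v_{i})D_{i}$. The hypothesis that $\nu$ is holomorphic on all of $Y$ and vanishes along every toric divisor forces $(\nu\, ,\, v_{i}) \geq 1$ for every ray generator $v_{i}$. Consequently the zero locus of $\nu$ coincides set-theoretically with $\bigcup_{i}D_{i}$: one inclusion is the hypothesis, and the other holds because a character is nowhere vanishing on the open torus $T$. For a toric Calabi-Yau variety the distinguished element $\underline{\nu}$ satisfies $(\underline{\nu}\, ,\, v_{i})=1$ for all $i$, so the assumption is automatic in that case.

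Next I would pull $\nu$ back by $u$: the composition $\nu \circ u : \bP^{1} \to \bC$ is a holomorphic function on the compact manifold $\bP^{1}$, hence equals some constant $c \in \bC$. If $c = 0$, then $u(\bP^{1}) \subseteq \{\nu = 0\} = \bigcup_{i} D_{i}$, which is exactly the desired conclusion. If instead $c \neq 0$, then $u(\bP^{1})$ avoids the zero locus of $\nu$, so it lies in $Y \setminus \bigcup_{i} D_{i} = T \cong (\bC^{\times})^{\dim Y}$. However any holomorphic map $\bP^{1} \to (\bC^{\times})^{\dim Y}$ is constant, since each torus coordinate gives a nowhere-vanishing holomorphic function on $\bP^{1}$ and hence is constant. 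This contradicts the non-constancy of $u$, so this case cannot occur.

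I do not foresee a genuine obstacle; the argument is essentially the maximum modulus principle combined with the standard toric dictionary between lattice points and characters. The only small bookkeeping is the set-theoretic identification $\{\nu = 0\} = \bigcup_{i} D_{i}$, which rests on the inequality $(\nu\, ,\, v_{i}) \geq 1$ for all $i$ extracted from the hypothesis; the Calabi-Yau clause then follows at once by taking $\nu = \underline{\nu}$.
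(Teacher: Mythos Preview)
Your argument is correct and follows essentially the same route as the paper's proof: pull back the character to obtain a constant holomorphic function on $\bP^{1}$, rule out the nonzero case because the image would then land in the open torus and force $u$ to be constant, and conclude in the zero case that the image lies in $\bigcup_i D_i$. You spell out a bit more explicitly why $\{\nu=0\}=\bigcup_i D_i$ and why $\bP^{1}\to(\bC^{\times})^{\dim Y}$ is constant, but the structure is identical to the paper's.
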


\begin{proof}
Denote the holomorphic function corresponding to $\nu \in M$ by $f$. Then $f
\circ u$ gives a holomorphic function on ${\mathbf{P}}^1$, which must be a
constant by maximal principle. $f \circ u$ cannot be constantly non-zero, or
otherwise the image of $u$ lies in $(\mathbb{C}^\times)^n \subset Y$,
forcing $u$ to be constant. Thus $f \circ u \equiv 0$, implying the image of 
$u$ lies in the toric divisors of $Y$.

For a toric Calabi-Yau variety $X_0$, $\left( \underline{\nu}\, , \, v_i
\right) = 1 > 0$ for all $i = 0, \ldots, m-1$ implies that the meromorphic
function corresponding to $\underline{\nu}$ indeed has no poles.
\end{proof}

Let $L \subset X_0$ be a Lagrangian torus fiber and $b \in \pi_2(X_0, L)$ of
Maslov index two. We consider the moduli space $\overline M_{1}(X_0,b)$ of
stable maps from bordered Riemann surfaces of genus zero with one boundary
marked point to $X_0$ in the class $b$. Fukaya-Oh-Ohta-Ono \cite{FOOO}
defines the invariant 
\begin{equation*}
n_b := \int_{[\overline M_1(X_0,b)]} {\ev}^* [\pt].
\end{equation*}

We have the following

\begin{prop}
Let $\beta_i \in \pi_2(X_0, L)$ be a disc class of Maslov index two such
that $\beta_i \cdot D_j = \delta_{ij}$ for $i,j = 0, \ldots, m-1$. Then $%
\overline M_1(X_0,b)$ is empty unless $b = \beta_i$ for some $i$, or $b =
\beta_i + \alpha$, where $D_i$ is a compact toric divisor of $X_0$ and $%
\alpha \in H_2(X_0, \mathbb{Z})$ is represented by a rational curve.
\end{prop}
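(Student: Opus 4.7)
The plan is to classify the elements of $\overline M_1(X_0,b)$ by the standard decomposition of a stable disk into a principal disk component together with a tree of sphere bubbles attached to it. Since $X_0$ is Calabi-Yau, $c_1(TX_0)=0$, so the Maslov index is additive and the sphere components contribute nothing: if the stable map decomposes as $u_D\cup(\bigcup_j u_j)$ with $u_D$ the disk component, then $\mu(b)=\mu([u_D])$, forcing $\mu([u_D])=2$. I would then invoke Cho-Oh's classification \cite{Cho-Oh} of Maslov index two holomorphic disks with boundary on a Lagrangian torus fiber in a toric manifold: every such honest disk lies in one of the basic classes $\beta_i$. This immediately gives $[u_D]=\beta_i$ for some $i$, which handles the sub-case where there are no bubbles.

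Next I would analyze the sphere bubbles. By Lemma \ref{hol_sphere}, every non-constant sphere component has image inside the union of toric divisors of $X_0$. The Cho-Oh description also shows that a basic disk of class $\beta_i$ meets $\bigcup_j D_j$ transversally at a single interior point $x_i$ of $D_i$, the rest of the disk being contained in the open torus orbit. Any sphere in the bubble tree that is attached directly to the disk must therefore pass through $x_i$, and since $x_i$ belongs only to the stratum $D_i$, this first sphere is forced to lie entirely inside $D_i$. The remaining spheres in the tree then propagate through the toric strata, so in particular the total sphere class $\alpha=b-\beta_i$ is represented by a (possibly reducible) rational curve, which is the second alternative in the statement.

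It remains to show that $D_i$ must be compact whenever a non-trivial bubble is present. Since $D_i$ is itself a toric surface, the idea is to argue that if it were non-compact, then its fan would admit a character $\nu$ giving a holomorphic function on $D_i$ whose zero locus contains all the toric divisors of $D_i$; Lemma \ref{hol_sphere} applied to $D_i$ would then force the first bubble sphere to lie in the toric divisors of $D_i$, contradicting the fact that it passes through the interior point $x_i\in D_i$. The main obstacle in implementing this plan is precisely this last step: one has to verify that every non-compact toric divisor $D_i$ inside a toric Calabi-Yau 3-fold $X_0$ admits such a $\nu$. This should follow from the strong convexity of $\Sigma_0$ together with the Calabi-Yau condition $(\underline{\nu},v_j)=1$, which together restrict the fan of the star of $v_i$ to lie in an open half-space whenever $D_i$ is non-compact, yielding the desired $\nu$ in $M/\langle v_i\rangle$.
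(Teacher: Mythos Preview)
Your argument is correct and follows essentially the same route as the paper's proof: decompose a stable disk into a Maslov-two disk plus sphere bubbles, identify the disk component as a basic class $\beta_i$, use Lemma~\ref{hol_sphere} to confine the bubbles to the toric boundary, observe that the first bubble must meet the open orbit of $D_i$, and then rule out non-compact $D_i$ by applying Lemma~\ref{hol_sphere} again to $D_i$ itself. The paper simply asserts that when $D_i$ is non-compact its fan is ``simplicial convex incomplete'' and hence satisfies the hypothesis of Lemma~\ref{hol_sphere}; you flag this as the one nontrivial verification and correctly point to the strong convexity of $\Sigma_0$ and the Calabi-Yau hyperplane condition as the ingredients that force the star of a boundary vertex $v_i$ to project into an open half-plane of $N/\mathbb{Z}v_i$.
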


\begin{proof}
By \cite{FOOO}, $\overline M_1(X_0,b)$ is empty unless $b = \beta_i + \alpha$
for some $i = 0, \ldots, m-1$ and $\alpha \in H_2(X_0, \mathbb{Z})$ has
Chern number $0$. Now suppose $\overline M_1(X_0,b)$ is non-empty and $%
\alpha \not= 0$. Then $\alpha$ is realized by some chains of non-constant
holomorphic spheres $Q$ in $X_0$, which by Lemma \ref{hol_sphere} must lie
inside $\bigcup_{i=0}^{m-1} D_i$. $Q$ must have non-empty intersection with
the holomorphic disk representing $\beta_i \in \pi_2(X_0, L)$ for generic $L$%
, implying some components of $Q$ lie inside $D_i$ and have non-empty
intersection with the torus orbit $(\mathbb{C}^\times)^2 \subset D_i$. But
if $D_i$ is non-compact, then the fan of $D_i$ is simplicial convex
incomplete, and so $D_i$ is a toric manifold satisfying the condition of
Lemma \ref{hol_sphere}, forcing $Q$ to have empty intersection with $(%
\mathbb{C}^\times)^2 \subset D_i$.
\end{proof}

It was shown \cite{Cho-Oh}\cite{FOOO} that $n_b = 1$ for basic disc classes $%
b = \beta_i$. The remaining task is to compute $n_b$ for $b = \beta_i +
\alpha$ with nonzero $\alpha \in H_2(X_0)$. In this section we prove Theorem %
\ref{thm_main}, which relates $n_b$ to certain closed Gromov-Witten
invariants, which can then be computed by usual localization techniques.

Suppose we would like to compute $n_b$ for $b = \beta_i + \alpha$, and
without loss of generality let's take $i = 0$ and assume that $D_0$ is a
compact toric divisor. We construct a toric compactification $X$ of $X_0$ as
follows. Let $v_0$ be the primitive generator corresponding to $D_0$, and we
take $\Sigma$ to be the refinement of $\Sigma_0$ by adding the ray generated
by $v_{\infty} := -v_0$ (and then completing it into a convex fan). We
denote by $X = X_{\Sigma}$ the corresponding toric variety, which is a
compactification of $X_0$. We denote by $h \in H_2(X, \mathbb{Z})$ the fiber
class of $X$, which has the property that $h \cdot D_0 = h \cdot D_\infty = 1
$ and $h \cdot D = 0$ for all other irreducible toric divisors $D$. Then for 
$\alpha \in H_2(X_0, \mathbb{Z})$, we have the ordinary Gromov-Witten
invariant $\langle[\pt]\rangle^X_{0,1, h + \alpha}$.

When $X_0 = K_S$ for a toric Fano surface $S$ and $D_0$ is the zero section
of $K_S \to S$, by comparing the Kuranishi structures on moduli spaces, it
was shown by K.-W. Chan \cite{C} that the open Gromov-Witten invariant $n_b$
indeed agrees with the closed Gromov-Witten invariant $\langle[\pt]%
\rangle^X_{0,1, h + \alpha}$:

\begin{prop}[\protect\cite{C}]
\label{openclose} Let $X_0 = K_S$ for a toric Fano surface $S$ and $X$ be
the fiberwise compactification of $X_0$. Let $b= \beta_i + \alpha$ with $%
\beta_i\cdot S=1$ and $\alpha \in H_2(S, \mathbb{Z})$. Then 
\begin{equation*}
n_b = \langle[\pt]\rangle^X_{0,1, h + \alpha}.
\end{equation*}
\end{prop}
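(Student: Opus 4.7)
\textbf{Proof proposal for Proposition \ref{openclose}.} The plan is to construct an explicit correspondence between the disk moduli space $\overline{M}_1(X_0, b)$ with $b = \beta_i + \alpha$ and the stable-map moduli space $\overline{M}_{0,1}(X, h+\alpha)$ whose evaluation lands on a generic point $x \in D_\infty$, and then match the Kuranishi data on both sides. The geometric idea is the classical ``capping'' construction: given a Maslov-$2$ disk $u: (\Delta, \partial \Delta) \to (X_0, L)$ in class $\beta_i + \alpha$ whose boundary wraps the $S^1$-orbit over $i$ in the fiber $\mathbb{C} \cong \mathbf{P}^1 \setminus \{\infty\}$ of $K_S \to S$, the boundary $u(\partial \Delta)$ bounds the unique holomorphic cap $v$ in the compactified fiber lying between $L$ and $D_\infty$. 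Gluing $u$ and $v$ along $\partial \Delta$ produces a stable genus-zero map to $X$ whose homology class has been adjusted from $\beta_i + \alpha$ to $h + \alpha$, and whose image meets $D_\infty$ at the unique point determined by the fiber containing $u(\partial\Delta)$. Running this in reverse uses the fact that a rational curve in $X$ of class $h+\alpha$ meeting $D_\infty$ at a generic point $x$ must contain an irreducible fiber component through $x$ (by Lemma \ref{hol_sphere} applied to $D_\infty$), which can be split off along the Lagrangian intersection to leave a disk component of class $\beta_i + \alpha$.

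The next step is to upgrade this set-theoretic bijection to an isomorphism of topological moduli spaces (in the $C^\infty$-topology on stable maps), keeping track of the marked boundary point on the open side versus the interior marked point constrained to $[\pt] = \mathrm{PD}([D_\infty \cap H])$ on the closed side; the evaluation conditions match exactly because the boundary of a disk determines, and is determined by, the unique fiber it sits in, and the latter is encoded by the point of $D_\infty$ it caps to. I would then compare the two obstruction theories: on the open side Fukaya-Oh-Ohta-Ono's Kuranishi structure on $\overline{M}_1(X_0, b)$ has obstruction bundle coming from $H^{0,1}$ of the pullback tangent bundle with totally real boundary condition, while on the closed side one has the ordinary obstruction bundle of $\overline{M}_{0,1}(X, h+\alpha)$. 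The cap contributes no obstruction (the capping fiber disk is regular with trivial normal $H^{0,1}$), and the normal bundle splitting $TX|_{u(\Delta)} = TX_0|_{u(\Delta)}$ along the disk identifies the two obstruction sheaves on the corresponding strata. Matching the Kuranishi charts on boundary strata with sphere bubbles (which are shared by both moduli problems by Lemma \ref{hol_sphere}) extends this identification globally.

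The main obstacle will be the rigorous identification of Kuranishi structures on the boundary strata where sphere bubbles occur: one needs to check that the obstruction bundles, coordinate changes, and multisections chosen on $\overline{M}_1(X_0, b)$ pull back compatibly to those on $\overline{M}_{0,1}(X, h+\alpha)$ under the capping bijection, and that the Fano hypothesis on $S$ ensures no stray sphere components escape into $D_\infty \cup (X \setminus X_0)$ beyond the single capping fiber. Once this comparison of Kuranishi structures is established, integrating $\ev^*[\pt]$ on the open side and $\ev^*[\pt]$ on the closed side over the respective virtual fundamental cycles yields the same rational number, giving $n_b = \langle[\pt]\rangle^X_{0,1, h+\alpha}$. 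The heavy lifting in Chan's paper \cite{C} is precisely this compatibility of Kuranishi data, which is why we cite it rather than reproduce it here.
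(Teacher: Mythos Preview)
Your capping idea and the subsequent Kuranishi comparison are exactly the approach the paper takes in its proof of the slight generalization Proposition~\ref{openclose2} (Proposition~\ref{openclose} itself is only cited from \cite{C}). However, there is a concrete problem with where you place the marked point on the closed side. You constrain the interior marked point to lie at $x \in D_\infty$, arguing that the boundary circle of the disk determines the fiber and hence the capping point on $D_\infty$. But this correspondence forgets the position of the \emph{boundary marked point} along that circle: two disk maps in $\overline{M}_1(X_0,b)$ differing only by where the marked point sits on $\partial\Delta$ cap to the same sphere hitting the same point of $D_\infty$. So your map from the open moduli to the closed moduli is not injective, and the evaluation fibers you are trying to match have different dimensions---there is an extra $S^1$ on the open side.

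The paper (following Chan) avoids this by placing the constraint point $p$ in the open torus orbit, on the Lagrangian $L$ itself. The interior marked point on the capped sphere is then the image of the boundary marked point under the gluing (now an interior point on the ``equator'' of the fiber component $\Sigma_0$), and one obtains a literal equality $M_{\mathrm{op}}^{\ev = p} = M_{\mathrm{cl}}^{\ev=p}$ of evaluation fibers. With this single adjustment the rest of your sketch---unobstructedness of the disk/fiber component forcing the obstruction spaces to take the form $E_{\mathrm{cl}} = 0 \oplus E'$ and $E_{\mathrm{op}} = 0 \oplus E''$ with $E' = E''$, followed by matching the remaining Kuranishi data on the shared bubble strata---is precisely the paper's argument.
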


Indeed his proof extends to our setup without much modification, and for the
sake of completeness we show how it works:

\begin{prop}[slightly modified from \protect\cite{C}]
\label{openclose2} Let $X_0$ be a toric Calabi-Yau manifold and $X$ be its
compactification constructed above. Let $b= \beta_i + \alpha$ with $%
\beta_i\cdot S=1$ and $\alpha \in H_2(S, \mathbb{Z})$, and we assume that
all rational curves in $X$ representing $\alpha$ are contained in $X_0$.
Then 
\begin{equation*}
n_b = \langle[\pt]\rangle^X_{0,1, h + \alpha}.
\end{equation*}
\end{prop}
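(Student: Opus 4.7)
The plan is to adapt Chan's proof \cite{C} of Proposition \ref{openclose} almost verbatim, identifying where the Fano hypothesis was actually used and arguing that the assumption on $\alpha$ plays the same role in our toric Calabi--Yau setting. The central ingredient is a set-theoretic bijection between the moduli space of stable disks $\overline M_1(X_0, b)$ and the moduli of pointed rational curves $\overline M_{0,1}(X, h+\alpha)$, together with a comparison of their Kuranishi structures which intertwines the two evaluation maps. Once this correspondence is in place, the equality $n_b = \langle [\pt] \rangle^X_{0,1, h+\alpha}$ follows by integrating $\ev^*[\pt]$ against the respective virtual fundamental cycles.

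First I would construct the capping correspondence. By the Cho--Oh classification together with the bubble analysis used in the previous proposition, a stable disk representing $b = \beta_i + \alpha$ decomposes as a basic Maslov-index-two disk in class $\beta_i$ glued to rational curves in class $\alpha$ which lie in the toric divisors of $X_0$. Inside the fiber $\mathbf{P}^1$ of $X \to S$ meeting a given Lagrangian torus fiber $L$, the boundary circle $L \cap \mathbf{P}^1$ bounds two holomorphic disks in $X$: the one realizing $\beta_i$ and a complementary one, call it $\beta_i^\infty$, passing through $D_\infty$. Gluing the stable disk to $\beta_i^\infty$ along this boundary circle produces a nodal rational curve in $X$ in class $h + \alpha$, where $h = \beta_i + \beta_i^\infty$; the boundary marked point becomes an interior marked point. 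Conversely, every stable map in $\overline M_{0,1}(X, h+\alpha)$ meets $D_\infty$ in a unique point, from which the capping disk is recovered uniquely, so the assignment is invertible.

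Next I would verify compactness of $\overline M_{0,1}(X, h + \alpha)$, which is precisely the step where the Fano hypothesis of Proposition \ref{openclose} is replaced by the assumption on $\alpha$. Any stable limit decomposes into a component of fiber class $h$ together with additional components contributing $\alpha$; the hypothesis that every rational curve representing $\alpha$ is contained in $X_0$, combined with Lemma \ref{hol_sphere} applied to the noncompact toric divisors of $X$, rules out any degeneration in which components drift into the added divisor $D_\infty$ or escape toward the other noncompact toric divisors of $X$. Compactness of $\overline M_1(X_0, b)$ then follows via the bijection. After this, Chan's identification of Kuranishi structures goes through unchanged, since it is carried out by a local analysis along the capping circle that uses only the toric geometry near $L$ and the fiberwise splitting $T_p X = T_p L \oplus J T_p L$ at boundary points; neither of these is affected by the presence of extra noncompact toric divisors in the non-Fano case.

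The main obstacle is the compactness step: one must verify carefully that the hypothesis on $\alpha$, together with Lemma \ref{hol_sphere}, rules out every possible sphere-bubble escaping into $D_\infty$ or along the unbounded directions of $X$, including mixed configurations where a fiber-direction bubble interacts with an $\alpha$-class bubble. All remaining steps, namely the Kuranishi comparison and the matching of evaluation maps, are a direct reprise of \cite{C}, and the equality of integrals follows immediately from the isomorphism of Kuranishi structures.
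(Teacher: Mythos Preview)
Your proposal is correct and follows essentially the same route as the paper: both set up a bijection between the open and closed moduli (you via the explicit capping disk $\beta_i^\infty$, the paper via the identification $M_{\mathrm{op}}^{\ev=p}=M_{\mathrm{cl}}^{\ev=p}$ of fibers over a point using $\mathbf{T}$-equivariance), invoke the hypothesis on $\alpha$ to guarantee that no sphere bubble escapes into $D_\infty$, and then compare Kuranishi structures. The paper is slightly more explicit on the last step, noting that the obstruction on the fiber-class component $\Sigma_0$ vanishes so that the obstruction bundles on both sides reduce to the same $E'$ supported on the bubble tree $\Sigma_1$; you subsume this under ``Chan's identification goes through unchanged,'' which is accurate.
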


\begin{proof}
For notation simplicity let $M_{\mathrm{op}} := \overline M_1(X_0,b)$ be the
open moduli and $M_{\mathrm{cl}} := \overline M_1(X,h + \alpha)$ be the
corresponding closed moduli. By evaluation at the marked point we have a $%
\mathbf{T}$-equivariant fibration 
\begin{equation*}
\mathrm{ev}: M_{\mathrm{op}} \to \mathbf{T}
\end{equation*}
whose fiber at $p \in \mathbf{T} \subset X_0$ is denoted as $M_{\mathrm{op}%
}^{\mathrm{ev} = p}$. Similarly we have a $\mathbf{T}_\cpx$-equivariant
fibration 
\begin{equation*}
\mathrm{ev}: M_{\mathrm{cl}} \to \bar{X}
\end{equation*}
whose fiber is $M_{\mathrm{cl}}^{\mathrm{ev} = p}$.  
%\begin{center}\includegraphics[scale=0.5]{virt_cycle.jpg}\end{center}
By the assumption that all rational curves in $X$ representing $\alpha$ is
contained in $X_0$, one has 
\begin{equation*}
M_{\mathrm{op}}^{\mathrm{ev} = p} = M_{\mathrm{cl}}^{\mathrm{ev} = p}.
\end{equation*}

There is a Kuranishi structure on $M_{\mathrm{cl}}^{\mathrm{ev} = p}$ which
is induced from that on $M_{\mathrm{cl}}$ (please refer to \cite{Fukaya-Ono}
and \cite{FOOO_book} for the definitions of Kuranishi structures).
Transversal multisections of the Kuranishi structures give the virtual
fundamental cycles $[M_{\mathrm{op}}] \in H_n (X_0, \mathbb{Q})$ and $[M_{%
\mathrm{op}}^{\mathrm{ev} = p}] \in H_0 (\{p\}, \mathbb{Q})$. In the same
way we obtain the virtual fundamental cycles $[M_{\mathrm{cl}}] \in H_{2n}
(X, \mathbb{Q})$ and $[M_{\mathrm{cl}}^{\mathrm{ev} = p}] \in H_0 (\{p\}, 
\mathbb{Q})$. By taking the multisections to be $\mathbf{T}_\cpx$- ($\mathbf{%
T}$-) equivariant so that their zero sets are $\mathbf{T}_\cpx$- ($\mathbf{T}
$-) invariant, 
\begin{equation*}
\deg [\overline M_{\mathrm{cl/op}}^{\mathrm{ev} = p}] = \deg [\overline M_{%
\mathrm{cl/op}}]
\end{equation*}
and thus it remains to prove that the Kuranishi structures on $M_{\mathrm{cl}%
}^{\mathrm{ev} = p}$ and $M_{\mathrm{op}}^{\mathrm{ev} = p}$ are the same.

Let $[u_{\mathrm{cl}}] \in M_{\mathrm{cl}}^{\mathrm{ev} = p}$, which
corresponds to an element $[u_{\mathrm{op}}] \in M_{\mathrm{op}}^{\mathrm{ev}
= p}$. $u_{\mathrm{cl}}: (\Sigma,q) \to X$ is a stable holomorphic map with $%
u_{\mathrm{cl}} (q) = p$. $\Sigma$ can be decomposed as $\Sigma_0 \cup
\Sigma_1$, where $\Sigma_0 \cong {\mathbf{P}}^1$ such that $u_* [\Sigma_0]$
represents $h$, and $u_* [\Sigma_1]$ represents $\alpha$. Similarly the
domain of $u_{\mathrm{op}}$ can be docomposed as $\Delta \cup \Sigma_1$,
where $\Delta \subset \mathbb{C}$ is the closed unit disk.

We have the Kuranishi chart $(V_{\mathrm{cl}},E_{\mathrm{cl}},\Gamma_{%
\mathrm{cl}},\psi_{\mathrm{cl}},s_{\mathrm{cl}})$ around $u_{\mathrm{cl}}
\in M_{\mathrm{cl}}^{\mathrm{ev} = p}$, where we recall that $E_{\mathrm{cl}%
} \oplus \mathrm{Im} (D_{u_{\mathrm{cl}}} \bar{\partial}) = \Omega^{(0,1)}
(\Sigma, u_{\mathrm{cl}}^* TX)$ and $V_{\mathrm{cl}} = \{\bar{\partial} f
\in E; f(q) = p\}$. On the other hand let $(V_{\mathrm{op}},E_{\mathrm{op}%
},\Gamma_{\mathrm{op}},\psi_{\mathrm{op}},s_{\mathrm{op}})$ be the Kuranishi
chart around $u_{\mathrm{op}} \in M_{\mathrm{op}}^{\mathrm{ev} = p}$.

Now comes the key: since the obstruction space for the deformation of $u_{%
\mathrm{cl}} |_{\Sigma_0}$ is $0$, $E_{\mathrm{cl}}$ is of the form $0
\oplus E^{\prime}\subset \Omega^{(0,1)} (\Sigma_0, u_{\mathrm{cl}%
}|_{\Sigma_0}^* TX) \times \Omega^{(0,1)} (\Sigma_1, u_{\mathrm{cl}%
}|_{\Sigma_1}^* TX)$. Similarly $E_{\mathrm{op}}$ is of the form $0 \oplus
E^{\prime\prime}\subset \Omega^{(0,1)} (\Delta, u_{\mathrm{op}}|_{\Delta}^*
TX) \times \Omega^{(0,1)} (\Sigma_1, u_{\mathrm{op}}|_{\Sigma_1}^* TX)$. But
since $D_{u_{\mathrm{cl}}|_{\Sigma_1}} \bar{\partial} = D_{u_{\mathrm{op}%
}|_{\Sigma_1}} \bar{\partial}$, $E^{\prime}$ and $E^{\prime\prime}$ can be
taken as the same subspace! Once we do this, it is then routine to see that $%
(V_{\mathrm{cl}},E_{\mathrm{cl}},\Gamma_{\mathrm{cl}},\psi_{\mathrm{cl}},s_{%
\mathrm{cl}}) = (V_{\mathrm{op}},E_{\mathrm{op}},\Gamma_{\mathrm{op}},\psi_{%
\mathrm{op}},s_{\mathrm{op}})$.
\end{proof}

\begin{thm}
\label{thm_main} Let $X_0$ be a toric Calabi-Yau threefold and denote by $S$
the union of its compact toric divisors. Let $L$ be a Lagrangian torus fiber
and $b = \beta + \alpha \in \pi_2(X_0,L)$, where $\alpha \in H_2(S)$ and $%
\beta \in \pi_2(X_0,L)$ is of Maslov index two with $\beta \cdot S = 1$.

Given this set of data, a toric Calabi-Yau threefold $W_0$ can be
constructed explicitly with the following properties:

\begin{enumerate}
\item $W_0$ is birational to $X_0$.

\item Let $S_1 \subset W_0$ be the union of compact divisors of $W_0$. Then $%
S_1$ is the blow up of $S$ at one point, with $\alpha^{\prime}\in H_2(S_1)$
being the strict transform of $\alpha \in H_2(S)$.
\end{enumerate}

\noindent Then the open Gromov-Witten invariant $n_b$ of $(X_0,L)$ is equal
to the ordinary Gromov-Witten invariant $\langle 1 \rangle^{W_0}_{0,0,
\alpha^{\prime}}$ of $W_0$, i.e., 
\begin{equation*}
n_b = \langle 1 \rangle^{W_0}_{0,0, \alpha^{\prime}} 
\end{equation*}
provided that every rational curve representative of $\alpha^{\prime}$ in $%
W_0$ lies in $S_1$.
\end{thm}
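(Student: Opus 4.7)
The plan is to assemble three ingredients already developed in the paper: Proposition \ref{openclose2} converts $n_b$ to a one-point closed Gromov-Witten invariant on a partial compactification of $X_0$; the $n=1$ case of Theorem \ref{main}, in the quasi-projective form discussed at the end of Section \ref{bundle}, exchanges the point insertion for an exceptional class under blowup-and-flop; and the hypothesis on rational curves representing $\alpha'$ lets one replace the compact Gromov-Witten invariant on the fiberwise compactification of $W_0$ by the local one on $W_0$ itself.

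First I would build the partial compactification $X$ of $X_0$ described just before Proposition \ref{openclose2}: choose the ray $v_0$ of $\Sigma_0$ whose toric divisor $D_0$ is a compact component of $S$ with $\beta \cdot D_0 = 1$, enlarge $\Sigma_0$ by the ray $v_\infty := -v_0$, and complete to a simplicial fan $\Sigma$. Near a torus-fixed point $y \in D_0$ the variety $X$ looks like the fiberwise compactification of $K_{D_0}$. Applying Proposition \ref{openclose2} with $b = \beta + \alpha$ yields
\[
n_b = \langle [\pt] \rangle^X_{0,1, h+\alpha},
\]
and the hypothesis of that proposition on representatives of $\alpha$ is verified from the theorem's hypothesis on $\alpha'$ using the birational identification constructed in the next step.

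The second step is to construct $W_0$ as a toric Calabi-Yau threefold by star-subdividing the height-$1$ polytope of $\Sigma_0$ at the interior lattice point corresponding to $y$. This produces a toric Calabi-Yau threefold $W_0$ whose compact divisor locus $S_1$ is the toric blowup of $S$ at $y$ and carries $\alpha' = \pi^! \alpha - e_1$ as strict transform of $\alpha$, with a birational equivalence $X_0 \dashrightarrow W_0$ built into the fan refinement. Let $W$ be the fiberwise compactification of $W_0$ obtained analogously to $X$. A toric check confirms that $W$ is obtained from $X$ by blowing up the unique torus-fixed point $x$ of $D_\infty$ lying on the fiber through $y$ and then flopping the proper transform of that fiber, so Proposition \ref{GWblowup} applies and the quasi-projective form of Theorem \ref{main} gives
\[
\langle [\pt] \rangle^X_{0,1, h+\alpha} = \langle 1 \rangle^W_{0,0, \alpha'}.
\]

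Finally, the hypothesis that every rational curve representing $\alpha'$ in $W_0$ lies in $S_1$ implies the same for rational curves in $W$, so the moduli spaces $\overline M_{0,0}(W, \alpha')$ and $\overline M_{0,0}(W_0, \alpha')$ coincide together with their virtual fundamental cycles, giving $\langle 1 \rangle^W_{0,0,\alpha'} = \langle 1 \rangle^{W_0}_{0,0, \alpha'}$ and hence the theorem. The main obstacle is the toric bookkeeping in the second step: one must track how the compact toric divisors transform under the star subdivision followed by the flop to confirm that $S_1$ is indeed the toric blowup of $S$ at $y$, and match up the hypotheses on rational curves across the birational map $X_0 \dashrightarrow W_0$ so that Proposition \ref{openclose2} can be invoked. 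The Kuranishi-side analysis is already carried out by Chan's argument reproduced in Proposition \ref{openclose2}, so no further gluing or transversality issues arise.
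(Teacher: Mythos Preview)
Your overall strategy matches the paper's: apply Proposition~\ref{openclose2} to convert $n_b$ to $\langle[\pt]\rangle^X_{0,1,h+\alpha}$, then use the blowup-and-flop argument (Proposition~\ref{GWblowup} and the quasi-projective form of Theorem~\ref{main}) to reach $\langle 1\rangle^W_{0,0,\alpha'}$, and finally pass from $W$ to $W_0$ using the hypothesis on rational curves. That is exactly how the paper proceeds.

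However, your explicit toric construction of $W_0$ is wrong. You describe it as ``star-subdividing the height-$1$ polytope of $\Sigma_0$ at the interior lattice point corresponding to $y$.'' There is no such interior lattice point: a torus-fixed point $y\in D_0$ corresponds to a maximal cone, not to a ray, and the relevant new ray $w$ does not lie inside $|\Sigma_0|$ at all. In the paper's construction one blows up $X$ at a torus-fixed point $x\in D_\infty$, which adds the ray $w=v_\infty+u_1+u_2=-v_0+u_1+u_2$; since $\pairing{\underline{\nu}}{w}=1$ this vector sits on the Calabi--Yau hyperplane, but typically \emph{outside} the height-$1$ polytope of $\Sigma_0$ (e.g.\ for $X_0=K_{\mathbf{P}^2}$ one gets $w=(1,1,1)$, and $(1,1)$ lies outside the triangle with vertices $(1,0),(0,1),(-1,-1)$). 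Thus $W_0$ is obtained not by subdividing $\Sigma_0$ but by first adjoining the external ray $w$ to $\Sigma_0$ and then flopping the adjacent cones $\langle w,u_1,u_2\rangle$ and $\langle u_0,u_1,u_2\rangle$, exactly as in Figure~\ref{fig_flop}. Your later sentence (``$W$ is obtained from $X$ by blowing up the unique torus-fixed point $x$ of $D_\infty$ lying on the fiber through $y$ and then flopping the proper transform of that fiber'') is correct and is precisely the content of Proposition~\ref{GWblowup}; you should build $W_0$ from that description rather than from a nonexistent interior subdivision. Once this is fixed, the class-tracking ($h=h'+\delta$, $h'\mapsto h''\mapsto -e$, hence $\alpha'=\tilde\delta+\tilde\alpha-e$) and the concluding chain of equalities go through as you outlined.
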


In particular, when $X_0=K_S$, we obtain Theorem \ref{cor_main} as its
corollary.

\begin{proof}
{We first construct the toric variety $W_0$. To begin with, let $D_\infty$
be the toric divisor corresponding to $v_\infty$. Let $x \in X$ be one of
the torus-fixed points contained in $D_\infty$. First we blow up $x$ to get $%
X_1$, whose fan $\Sigma_1$ is obtained by adding the ray generated by $w =
v_\infty + u_1 + u_2$ to $\Sigma$, where $v_\infty$, $u_1$ and $u_2$ are the
normal vectors to the three facets adjacent to $x$. There exists a unique
primitive vector $u_0 \not= w$ such that $\{ u_0, u_1, u_2 \}$ generates a
simplicial cone in $\Sigma_1$ and $u_0$ corresponds to a compact toric
divisor of $X_1$: If $\{v_0, u_1, u_2\}$ spans a cone of $\Sigma_1$, then
take $u_0 = v_0$; otherwise since $\Sigma_1$ is simplicial, there exists a
primitive vector $u_0 \subset \mathbb{R}\langle v_0, u_1, u_2 \rangle$ with
the required property. Now $\langle u_1, u_2, w \rangle_\bR$ and $\langle
u_1, u_2, u_0 \rangle_\bR$ form two adjacent simplicial cones in $\Sigma_1$,
and we may employ a flop to obtain a new toric variety $W$, whose fan $%
\Sigma_W$ contains the adjacent cones $\langle w, u_0, u_1 \rangle_\bR$ and $%
\langle w , u_0, u_2 \rangle_\bR$. (See Figure \ref{fig_flop}). }

\begin{figure}[h!]
\begin{center}
\setlength{\unitlength}{3mm} 
\begin{picture}(28,10)(0,0)
\linethickness{0.075mm}
\put(0,0){\line(1,0){6}}\put(6,0){\line(0,1){6}}\put(0,0){\line(0,1){6}}
\put(0,6){\line(1,0){6}}
\put(0,6){\line(1,-1){6}}\put(-1,-1){$u_0$}\put(6,-1){$u_2$}\put(-1,6.5){$u_1$}\put(6,6.5){$w$}
\put(20,0){\line(1,0){6}}\put(26,0){\line(0,1){6}}\put(20,0){\line(0,1){6}}
\put(20,6){\line(1,0){6}}
\put(20,0){\line(1,1){6}}\put(19,-1){$u_0$}\put(26,-1){$u_2$}\put(19,6.5){$u_1$}\put(26,6.5){$w$}
\end{picture}
\end{center}
\caption{A flop.}
\label{fig_flop}
\end{figure}

$W$ is the compactification of another toric Calabi-Yau $W_0$ whose fan is
constructed as follows: First we add the ray generated by $w$ to $\Sigma_0$,
and then we flop the adjacent cones $\langle w, u_1, u_2 \rangle$ and $%
\langle u_0, u_1, u_2 \rangle$. $W_0$ is Calabi-Yau because 
\begin{equation*}
\left( \underline{\nu}\, , \, w \right) = 1 
\end{equation*}
and a flop preserves this Calabi-Yau condition. $\Sigma_W$ is recovered by
adding the ray generated by $v_\infty$ to the fan $\Sigma_{W_0}$.

Now we analyze the transform of classes under the above construction. The
class $h \in H_2(X, \mathbb{Z})$ can be written as $h^{\prime}+ \delta$,
where $h^{\prime}\in H_2(X, \mathbb{Z})$ is the class corresponding to the
cone $\langle u_1, u_2 \rangle_\bR$ of $\Sigma$ and $\delta \in H_2 (X_0, 
\mathbb{Z})$. Let $h^{\prime\prime}\in H_2(X_1, \mathbb{Z})$ be the class
corresponding to $\{u_1, u_2\} \subset \Sigma_1$, which is flopped to $e \in
H_2(W, \mathbb{Z})$ corresponding to the cone $\langle w, u_0 \rangle_\bR$
of $\Sigma_W$. Finally let $\tilde{\delta}, \tilde{\alpha} \in H_2(W, 
\mathbb{Z})$ be classes corresponding to $\delta, \alpha \in H_2(X_1, 
\mathbb{Z})$ respectively under the flop. Then $\alpha^{\prime}=\tilde{\delta%
} + \tilde{\alpha} - e$ is actually the strict transform of $\alpha$.

Applying Proposition \ref{openclose2} and Theorem \ref{main}, we obtain the
equality 
\begin{equation*}
n_b = \langle 1 \rangle^{W_0}_{0,0, \alpha^{\prime}}.
\end{equation*}
\end{proof}

Finally we give an example to illustrate the open Gromov-Witten invariants.

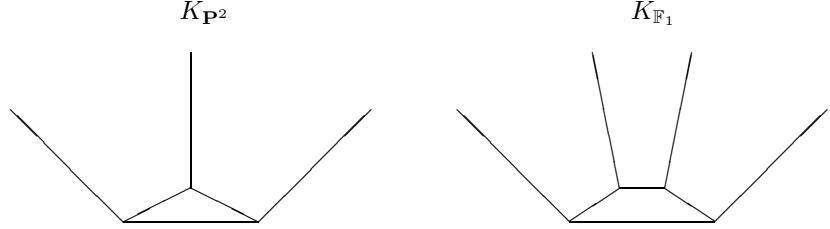
\begin{figure}[h!]
\begin{center}
\setlength{\unitlength}{3mm} 
\begin{picture}(28,10)(0,0)
\linethickness{0.075mm}
\put(0,0){\line(1,0){6}}\put(6,0){\line(1,1){5}}\put(0,0){\line(-1,1){5}}
\put(3,1.5){\line(0,1){6}}\put(3,1.5){\line(2,-1){3}}\put(3,1.5){\line(-2,-1){3}}
\put(2.5,9){$K_{\proj^2}$}
\put(19.8,0){\line(1,0){6.4}}\put(26.2,0){\line(1,1){5}}\put(19.8,0){\line(-1,1){5}}
\put(22,1.5){\line(-1,5){1.2}}\put(22,1.5){\line(1,0){2}}\put(22,1.5){\line(-3,-2){2.2}}
\put(24,1.5){\line(1,5){1.2}}\put(24,1.5){\line(3,-2){2.2}}
\put(22.5,9){$K_{\bF_1}$}
\end{picture}
\end{center}
\caption{Polytope picture for $K_{{\mathbf{P}}^2}$ and $K_{\mathbb{F}_1}$.}
\label{KP2_KF1}
\end{figure}

\begin{eg}
Let $X_0 = K_{{\mathbf{P}}^2}$. There is exactly one compact toric divisor $%
D_0$ which is the zero section of $X_0\to{\mathbf{P}}^2$. The above
construction gives $W_0 = K_{\mathbb{F}_1}$. (Figure \ref{KP2_KF1}). Let $%
\alpha = kl \in H_2(X_0, \mathbb{Z})$, where $l$ is the line class of ${%
\mathbf{P}}^2 \subset K_{{\mathbf{P}}^2}$ and $k > 0$. By Theorem \ref%
{thm_main}, 
\begin{equation*}
n_{\beta_0+kl} = \langle 1 \rangle^{W_0}_{0,0, kl - e}
\end{equation*}
where $e$ is the exceptional divisor of $\mathbb{F}_1 \subset K_{\mathbb{F}%
_1}$. The first few values of these local invariants for $K_{\mathbb{F}_1}$
are listed in Table \ref{tableF1}.
\end{eg}

\section{A generalization to ${\mathbf{P}^{n}}$-bundles}

\label{generalization}

In this section we generalize Theorem \ref{main} to higher dimensions, that
is, to ${\mathbf{P}^{n}}$-bundles over an arbitrary smooth projective
variety.

Let $X$ be an $n$-dimensional smooth projective variety. Let $F$ be a rank $r
$ vector bundle over $X$ with $1\le r<n$. Let $p:W=\mathbf{P}(F\oplus%
\mathcal{O}_X)\to X$ be a ${\mathbf{P}^{r}}$-bundle over $X$. There are two
canonical subvarieties of $W$, say $W_0=\mathbf{P}(0\oplus\mathcal{O}_X)$
and $W_\infty=\mathbf{P}(F\oplus 0)$. We have $W_0\cong X$.

Let $S\subset X$ be a smooth closed subvariety of codimension $r+1$ with
normal bundle $N$. Let $\pi:{\tilde X}\to X$ be the blowup of $X$ along $S$
with exceptional divisor $E=\mathbf{P}(N)$. Then $F^{\prime}=\pi^*F\otimes%
\mathcal{O}_{{\tilde X}}(E)$ is a vector bundle of rank $r$ over ${\tilde X}$%
. Similar to $p:W\to X$, we let $p^{\prime}:W^{\prime}=\mathbf{P}%
(F^{\prime}\oplus\mathcal{O}_{{\tilde X}})\to {\tilde X}$.

It is easy to see that $W$ and $W^{\prime}$ are birational. We shall
construct an explicit birational map $g:W\dashrightarrow W^{\prime}$. It
induces a homomorphism between groups 
\begin{equation*}
g^{\prime}:H_2(W,\mathbb{Z})\to H_2(W^{\prime},\mathbb{Z}).
\end{equation*}
Let $\beta=h+\alpha\in H_2(W,\mathbb{Z})$ with $h$ the fiber class of $W$
and $\alpha\in H_2(X,\mathbb{Z})$. Then we establish a relation between
certain Gromov-Witten invariants of $W$ and $W^{\prime}$.

\begin{prop}
\label{mainprop} {Let $Y=\mathbf{P}(F_S\oplus 0)\subset W$. For $%
g:W\dashrightarrow W^{\prime}$, we have 
\begin{equation*}
\langle\gamma_1,\gamma_2,\cdots,\gamma_{m-1},PD([Y])\rangle^W_{0,m,\beta}=%
\langle\gamma^{\prime}_1,\cdots,\gamma^{\prime}_{m-1}\rangle^{W^{%
\prime}}_{0,m-1,\beta^{\prime}}.
\end{equation*}
Here $\gamma^{\prime}_i$ is the image of $\gamma_i$ under $H^*(W)\to
H^*(W^{\prime})$ and $\beta^{\prime}=g^{\prime}(\beta)$. }
\end{prop}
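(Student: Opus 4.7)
The plan is to imitate the two-step proof of Theorem \ref{main} but in the higher-dimensional setting, using an intermediate space obtained by blowing up $W$ along $Y$ and identifying the resulting variety with $W'$ up to a suitable flop.

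First, I would construct an explicit factorization of $g: W \dashrightarrow W'$ as $W \xleftarrow{\pi_1} \tilde W \overset{\pi_2}{\dashrightarrow} W'$, where $\pi_1$ is the blowup of $W$ along $Y = \mathbf{P}(F_S \oplus 0) \subset W_\infty$ and $\pi_2$ is a birational contraction obtained by blowing down in the transversal direction. To identify $\pi_2$ with the projection $W' = \mathbf{P}(F' \oplus \mathcal{O}_{\tilde X}) \to \tilde X$, I would mimic the argument of Proposition \ref{GWblowup}: consider the alternative factorization obtained by first blowing up $W$ along the divisor $p^{-1}(S)$ and then blowing up along the proper transform of $Y$; this agrees with $\tilde W$ and makes manifest that the contracted variety is the projective bundle of $\pi^* F \otimes \mathcal{O}_{\tilde X}(E) \oplus \mathcal{O}_{\tilde X}$, i.e.\ exactly $W'$. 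The twist by $\mathcal{O}(E)$ appears precisely because of the exceptional divisor introduced by $\pi_1$.

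Second, I would use a Gathmann/Hu type blowup formula to trade the insertion $PD([Y])$ on $W$ for a shifted curve class on $\tilde W$. Concretely, if $e$ denotes the class of a line in the fiber of the exceptional divisor of $\pi_1 : \tilde W \to W$ over $Y$, one expects
\begin{equation*}
\langle \gamma_1,\ldots,\gamma_{m-1},PD([Y]) \rangle^W_{0,m,\beta} = \langle \pi_1^*\gamma_1,\ldots,\pi_1^*\gamma_{m-1} \rangle^{\tilde W}_{0,m-1,\pi_1^!\beta - e}.
\end{equation*}
Third, I would apply the Li--Ruan flop formula (Theorem \ref{thmLR}, or its generalization to flops along higher-dimensional smooth centers with $(-1,\ldots,-1)$-type normal bundle) to the birational map $\pi_2 : \tilde W \dashrightarrow W'$, identifying the invariants above with $\langle \gamma'_1,\ldots,\gamma'_{m-1} \rangle^{W'}_{0,m-1,\beta'}$ after checking that $\varphi(\pi_1^!\beta - e) = \beta' = g'(\beta)$ under the induced isomorphism $H_2(\tilde W,\mathbb{Z}) \to H_2(W',\mathbb{Z})$.

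The main obstacle will be the second step: Theorem \ref{thmGH} is stated only for blowups at a point, whereas here $Y$ has codimension $r+2$ in $W$, and the Poincar\'e dual insertion need not restrict to a point class on a generic fiber of $\pi_1$. One should either invoke a higher-codimension version of the Gathmann--Hu formula or argue directly, by analyzing the moduli space of stable maps meeting $Y$ and comparing it (via the blowup construction) with the corresponding moduli on $\tilde W$ in the class $\pi_1^!\beta - e$; the key input is that the flop center is of simple type, so the curve classes of interest avoid the contracted locus and the virtual classes match. A secondary subtlety is verifying that $\pi_2$ is indeed a \emph{simple} flop (or a composition of such) along the proper transform in $\tilde W$ of the fibers of $p$ over $S$, which should follow from the normal bundle computation parallel to the one in Proposition \ref{GWblowup}.
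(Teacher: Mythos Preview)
Your three-step outline---factor $g$ as blowup along $Y$ followed by a flop, apply a blowup formula to remove the $PD([Y])$ insertion, then invoke flop-invariance of GW invariants---is exactly the paper's strategy, and your diagnosis of the second step as the main obstacle matches the paper's own treatment: the paper states the needed higher-codimension blowup identity separately (Proposition~\ref{propblowup}), under the explicit assumption that every curve in class $\beta$ decomposes uniquely as a fiber plus a curve in $X$, so that it meets $Y$ in at most one point.

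The one place your proposal diverges is the third step. You invoke Theorem~\ref{thmLR} ``or its generalization''; but Li--Ruan applies only to simple $(-1,-1)$ flops in threefolds, and the flop here is an ordinary $\mathbf{P}^r$-flop along the proper transform $\tilde Z$ of $Z=p^{-1}(S)$ (a $\mathbf{P}^r$-bundle over $S$ whose normal bundle restricts to $\mathcal{O}(-1)^{\oplus r+1}$ on each fiber), not along individual fiber curves. The paper does not attempt to extend Li--Ruan but instead cites Lee--Lin--Wang \cite{LLW}, who prove that for ordinary flops of splitting type the big quantum cohomology rings are isomorphic, hence the genus-zero invariants agree. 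This is the correct input in the higher-dimensional setting; your ``secondary subtlety'' about identifying the flop type is resolved by the normal-bundle computation carried out in the paper's construction of $g$.
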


The birational map $g:W\dashrightarrow W^{\prime}$ we shall construct below
can be factored as 
\begin{equation*}
W\overset{\pi_1^{-1}}{\dashrightarrow }{\tilde W}\overset{f}{\dashrightarrow 
}W^{\prime}
\end{equation*}
Here $\pi_1:{\tilde W}\to W$ is a blowup along a subvariety $Y$. We assume
that every curve $C$ in class $\beta$ can be decomposed uniquely as $C=H\cup
C^{\prime}$ with $H$ a fiber and $C^{\prime}$ a curve in $X$. It follows
that the intersection of $C$ and $Y$ is at most one point. Under this
assumption we generalize Theorem \ref{thmGH} in a straightforward manner as
follows.

\begin{prop}
\label{propblowup} Let $E^{\prime}$ be the exceptional divisor of $\pi_1$.
Let $e$ be the line class in the fiber of $E^{\prime}\to Y$. Then we have 
\begin{equation*}
\langle\gamma_1,\gamma_2,\cdots,\gamma_{m-1},PD([Y])\rangle^W_{0,m,\beta}=
\langle\tilde\gamma_1,\cdots,\tilde\gamma_{m-1}\rangle^{{\tilde W}%
}_{0,m-1,\beta_1},
\end{equation*}
where $\tilde\gamma_i=\pi_1^*\gamma_i$ and $\beta_1=\pi^!(\beta)-e$.
\end{prop}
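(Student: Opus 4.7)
The plan is to mirror the proof of Theorem \ref{thmGH} by Gathmann and Hu, with the blowup at a point replaced by the blowup along the subvariety $Y$. The hypothesis on $\beta$ is tailored precisely so that the local geometry of a class-$\beta$ curve meeting $Y$ reduces to the single-point picture, making this a close analogue rather than a genuinely new argument.

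First I would set up a correspondence between the two underlying moduli problems. Given a stable map $u:(C, x_1, \ldots, x_m) \to W$ of class $\beta$ with $u(x_m) \in Y$, the hypothesis $C = H \cup C'$ forces $u(C) \cap Y$ to consist of a single transverse point, so the strict transform $\tilde{u}:(\tilde{C}, x_1, \ldots, x_{m-1}) \to \tilde{W}$ is well-defined and has class $\pi_1^!(\beta) - e$: the total transform minus the one exceptional line absorbed at the intersection. Conversely $\beta_1 \cdot E' = \pi_1^!\beta \cdot E' - e \cdot E' = 0 - (-1) = 1$, so any $\tilde{u}$ of class $\beta_1$ meets $E'$ exactly once, and composing with $\pi_1$ together with marking the image of that intersection as $x_m$ recovers a point of the left hand moduli. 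Since $\tilde{\gamma}_i = \pi_1^* \gamma_i$, the insertions on the right pull back under this correspondence to the first $m-1$ insertions on the left, while the omitted factor $\ev_m^* PD([Y])$ on the left is exactly what cuts out the matched locus.

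The delicate step, and main obstacle, is matching the virtual fundamental classes. One compares the perfect obstruction theories via the standard exact sequence $0 \to T\tilde{W} \to \pi_1^* TW \to j_* \mathcal{Q} \to 0$, where $j: E' \hookrightarrow \tilde{W}$ and $\mathcal{Q}$ is the normal bundle quotient, to track how $u^* TW$ degenerates into $\tilde{u}^* T\tilde{W}$ plus a correction supported where $\tilde{u}$ hits $E'$. The decomposition hypothesis localizes this analysis to a neighborhood of the unique intersection point of $u(C)$ with $Y$, where the configuration is fiberwise isomorphic to the point-blowup picture of Gathmann and Hu inside the fiber $H \cong \mathbf{P}^r$; their virtual class identification from \cite{Hu} then applies directly. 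The uniqueness of the intersection is precisely what prevents any spurious contribution from the higher-dimensional geometry of $Y$ entering the comparison.
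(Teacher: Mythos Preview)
Your approach matches the paper's, which in fact gives no detailed proof at all: it merely asserts that under the decomposition hypothesis the result is a straightforward generalization of Theorem~\ref{thmGH}, and your sketch is exactly an outline of how that generalization goes.

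One small imprecision worth fixing: the local model is not ``the point-blowup picture inside the fiber $H \cong \mathbf{P}^r$.'' The curve component $H$ in the decomposition is a \emph{line} in a $\mathbf{P}^r$-fiber (so that $[H]=h\in H_2(W)$), and the blowup center $Y$ has codimension $r+2$ in $W$, meeting such a fiber over $S$ in a hyperplane $\mathbf{P}^{r-1}$. The correct local picture is in a normal slice to $Y$ at the unique intersection point: there the blowup is a point-blowup of an $(r+2)$-dimensional ball, the curve passes through transversally, and Gathmann--Hu's comparison of obstruction theories applies verbatim in that slice. This is what the uniqueness hypothesis buys you, and once stated this way your argument goes through.
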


The proof of Proposition \ref{mainprop} is similar to that of Theorem \ref%
{main}.

\begin{proof}[Proof of Proposition \protect\ref{mainprop}]
{Since $g=f\pi_1^{-1}$, applying Proposition \ref{propblowup}, it suffices
to show 
\begin{equation*}
\langle\tilde\gamma_1,\cdots,\tilde\gamma_{m-1}\rangle^{{\tilde W}%
}_{0,m-1,\beta_1}=\langle\gamma^{\prime}_1,\cdots,\gamma^{\prime}_{m-1}%
\rangle^{W^{\prime}}_{0,m-1,\beta^{\prime}}
\end{equation*}
for the ordinary flop $f:{\tilde W}\dashrightarrow W^{\prime}$. }

Recall that Y-P.~Lee, H-W.~Lin and C-L.~Wang \cite{LLW} proved that for an
ordinary flop $f:M\dashrightarrow M_f$ of splitting type, the big quantum
cohomology rings of $M$ and $M_f$ are isomorphic. In particular, their
Gromov-Witten invariants for the corresponding classes are the same.
Therefore, the above identity follows.
\end{proof}

In the rest of the section we construct the birational map $%
g:W\dashrightarrow W^{\prime}$ in two equivalent ways.

Recall that $S\subset X$ is a subvariety. Let $p_S:Z=W\times_X S \to S$ be
the restriction of $p:W\to X$ to $S$. Then $Z=\mathbf{P}(F_S\oplus \mathcal{O%
}_S)$ with $F_S$ the restriction of $F$ to $S$. We denote $Y=Z\cap W_\infty=%
\mathbf{P}(F_S\oplus 0)$, and $q:Y\to S$ the restriction of $p_S$ to $Y$.
Since $Y$ is a projective bundle over $S$, we let $\mathcal{O}_{Y/S}(-1)$ be
the tautological line bundle over $Y$. The normal bundle of $Y$ in $Z$ is $%
N_{Y/Z}=\mathcal{O}_{Y/S}(1)$.

We start with the first construction of $g$. Let $\pi_1:{\tilde W}\to W$ be
the blowup of $W$ along $Y$. Since the normal bundle $N_{Y/W}$ is equal to $%
N_{Y/Z}\oplus N_{Y/W_\infty}=\mathcal{O}_{Y/S}(1)\oplus q^*N$, the
exceptional divisor of $\pi_1$ is 
\begin{equation*}
E^{\prime}=\mathbf{P}(\mathcal{O}_{Y/S}(1)\oplus q^*N).
\end{equation*}
Let ${\tilde Z}$ be the proper transform of $Z$ and ${\tilde Y}={\tilde Z}%
\cap E^{\prime}$. The normal bundle of ${\tilde Z}$ in ${\tilde W}$ is ${%
\tilde N}=p_S^*N\otimes\mathcal{O}_{{\tilde Z}}(-{\tilde Y})$.

Because $Z^{\prime}\cong Z$ is a ${\mathbf{P}^{r}}$-bundle over $S$, and the
restriction of ${\tilde N}$ to each ${\mathbf{P}^{r}}$-fiber of ${\tilde Z}$
is isomorphic to $\mathcal{O}(-1)^{\oplus r+1}$, we have an ordinary ${%
\mathbf{P}^{r}}$-flop $f:{\tilde W}\dashrightarrow {\tilde W}_f$ along ${%
\tilde Z}$. It can be verified that ${\tilde W}_f=W^{\prime}$ after
decomposing $f$ as a blowup and a blowdown. Finally we simply define $g$ as
the composite $f\pi_1^{-1}:W\dashrightarrow W^{\prime}$.

We describe the second construction of $g$, from which it is easy to see the
relation ${\tilde W}_f=W^{\prime}$.

We let $\rho_1:W_1\to W$ be the blowup of $W$ along $Z$ whose exceptional
divisor is denoted by $E_1$. Because the normal bundle of $Z$ in $W$ is $q^*N
$ for $q:Z\to S$, we know 
\begin{equation*}
E_1=\mathbf{P}(q^*N)\cong Z\times_S \mathbf{P}(N)=Z\times_S E.
\end{equation*}
Indeed, $W_1$ is isomorphic to the ${\mathbf{P}^{r}}$-bundle $\mathbf{P}%
(F_1\oplus\mathcal{O}_{{\tilde X}})$ over ${\tilde X}$ with $F_1=\pi^*F$.
Let $Y_1$ be the inverse image of $Y$. Now we let $\rho_2:W_2\to W_1$ be the
blowup of $W_1$ along $Y_1$ with exceptional divisor $E_2$. Let $E_1^{\prime}
$ be the proper transform of $E_1$ and $Y_2=E_1^{\prime}\cap E_2$. Notice
that $E_1^{\prime}\cong E_1$, and the normal bundle of $E_1$ is $%
N_1=q^*N\boxtimes\mathcal{O}_{E/S}(-1)$, we know the normal bundle of $%
E_1^{\prime}$ is $N_1^{\prime}=N_1\otimes\mathcal{O}_{E_1^{\prime}}(-Y_2)$.

Since $E_1^{\prime}\cong Z\times_S E$ is a ${\mathbf{P}^{r}}\times {\mathbf{P%
}^{r}}$-bundle over $S$, composed with the projection $Z\times_S E\to E$, we
see that $E_1^{\prime}\to E$ is a ${\mathbf{P}^{r}}$-bundle. Because the
restriction of $N_1^{\prime}$ to the ${\mathbf{P}^{r}}$-fiber of $%
E_1^{\prime}\to E$ is isomorphic to $\mathcal{O}(-1)^{\oplus r+1}$, we can
blowdown $W_2$ along these fibers of $E_1^{\prime}$ to get $\pi_3:W_2\to W_3=%
{\tilde W}_f$. From this description it is easy to see that $W_3=W^{\prime}$.

\end{document}